\documentclass[final]{amsart}
\usepackage{amssymb}
\usepackage{hhline}
\usepackage{graphicx}
\usepackage{stmaryrd}
\usepackage{float}
\usepackage{cite}
\usepackage{orcidlink}

\usepackage[utf8]{inputenc}
\usepackage{listings}
\usepackage{xcolor}

\newtheorem{theorem}{Theorem}[section]

\newtheorem{proposition}[theorem]{Proposition}
\newtheorem{corollary}[theorem]{Corollary}

\newtheorem{problem}[theorem]{Problem}
\theoremstyle{definition}

\newcommand{\Aut}{\mathrm{Aut\mkern 2mu}}
\newcommand{\Halo}{\mathrm{Halo\mkern 2mu}}
\newcommand{\M}{\mathrm{M\mkern 1mu}}

\title{On  dimonoids of small order}

\author{Volodymyr M. Gavrylkiv\,\orcidlink{0000-0002-6256-3672}}
\address[V.~Gavrylkiv]{Vasyl Stefanyk Carpathian National University, Ivano-Frankivsk, Ukraine} \email{vgavrylkiv@gmail.com}
\subjclass{18B40, 37L05, 22A15, 20D45, 20M15, 20B25}
\keywords{semigroup, dimonoid, abelian dimonoid, commutative dimonoid, rectangular dimonoid, halo, automorphism group}

\begin{document}

\begin{abstract}
This paper is devoted to the study of dimonoids, i.e., algebraic structures equipped with two associative binary operations satisfying a given system of axioms. We introduce several new classes of dimonoids and determine their automorphism groups and halos. In particular, we construct examples of iso-dual nonabelian dimonoids, as well as nonabelian dimonoids with nonempty halo. Using these constructions, we obtain a complete classification, up to isomorphism, of all nonabelian noncommutative nontrivial dimonoids of order~$3$, thereby resolving the problem concerning this classification. Consequently, all three-element dimonoids are classified up to isomorphism, yielding exactly $52$ pairwise nonisomorphic dimonoids of order~$3$. Finally, we present the results of computer computations determining the numbers of all pairwise nonisomorphic dimonoids of orders up to~$5$, as well as all pairwise nonisomorphic commutative, abelian, and rectangular dimonoids of orders up to~$6$, obtained using \texttt{GAP}, \texttt{Python}, and \texttt{C++}.
\end{abstract}

\maketitle

\section*{Introduction}

The concept of  dialgebra was introduced by Jean-Louis Loday~\cite{Lod} in his search for a class of (linear) algebras that generate Leibniz algebras in a manner analogous to how associative algebras give rise to Lie algebras. Recall that a Leibniz algebra is a linear algebra over a field whose bracket operation $[, ]$ satisfies the Leibniz derivation identity
$$[[x, y], z] = [[x, z], y] + [x, [y, z]],$$
without necessarily being anticommutative.

Loday’s idea was to “separate” the left and right multiplications, treating them as distinct associative operations, denoted $\vdash$ and $\dashv$.
He observed that if these operations satisfy the following three axioms:
\begin{align*}
(x \dashv y) \dashv z &= x \dashv (y \vdash z), \hspace{10mm}(D_1) \\
(x \vdash y) \dashv z &= x \vdash (y \dashv z), \hspace{10mm}(D_2)\\\
(x \dashv y) \vdash z &= x \vdash (y \vdash z), \hspace{10mm}(D_3)
\end{align*}
then the bracket defined by $[x, y] = x \dashv y - y \vdash x$ satisfies the Leibniz identity.

Accordingly, Loday defined a {\em dimonoid}~\cite{Lod} as an algebraic structure $(D,\dashv,\vdash)$ consisting of a set $D$ equipped with two associative binary operations, $\dashv$ and $\vdash$, satisfying axioms $(D_1)$, $(D_2)$, and $(D_3)$. Each semigroup $(D,\dashv)$ can naturally be regarded as a dimonoid $(D,\dashv,\dashv)$, called the {\em trivial dimonoid}. In this sense, dimonoids generalize semigroups. Since dialgebras are the linear analogues of dimonoids, many results on dimonoids have direct applications in the theory of dialgebras~\cite{B,F,Lod,M,ZCY}. Given the central role of dimonoids in the study of Leibniz algebras, their investigation by semigroup-theoretic methods represents a natural and promising direction of research.

T.~Pirashvili~\cite{P} introduced the concept of a duplex, a generalization of dimonoids, and constructed the free duplex. In~\cite{MDS}, a generalized dimonoid (or $g$-dimonoid) is introduced as an algebraic structure similar to a dimonoid, satisfying all the axioms of a dimonoid except possibly $(D_2)$, and the construction of the free $g$-dimonoid is also described. Several classes of $g$-dimonoids were studied in~\cite{Ggdim1}. The properties of free dimonoids were employed in~\cite{Lod} to characterize free dialgebras and to study their cohomologies. In~\cite{Liu}, the notion of a dimonoid was used to define and investigate one-sided dirings. Furthermore, dimonoids are closely related to restrictive bisemigroups~\cite{Sh} and doppelsemigroups~\cite{GR1, GDS2, GUDS, GSDS, GLDS, GDSso, Zh2017AU}.

One of the earliest foundational results on dimonoids is due to Loday~\cite{Lod}, who provided a description of the absolutely free dimonoid generated by a given set.
A wide range of classes of dimonoids have been systematically investigated by Anatolii Zhuchok and Yurii Zhuchok. In~\cite{ZhAL2011}, the independence of the dimonoid axioms was established. Commutative, free commutative, and free abelian dimonoids were studied in~\cite{ZhADM2009}, \cite{ZhADM2010}, and~\cite{ZhADM2015}, respectively. The structure of dibands of subdimonoids and semilattice decompositions of dimonoids was explored in~\cite{ZhMS2011, ZhDM2011}. Free rectangular dimonoids, as well as free normal and free $(\ell r, rr)$-dibands, were constructed in~\cite{ZhADM2011a}, \cite{ZhADM2011}, and~\cite{ZhADM2013}, respectively. Free abelian dibands and some of their properties were studied in~\cite{ZhVLU2017, ZhADM2018}. The least semilattice congruence on free dimonoids was described in~\cite{ZhUMJ2011}. Free products of dimonoids and relatively free dimonoids were the focus of several works, including~\cite{ZhQRL2013, ZhMP2014, ZhCA2017a, ZhIJAC2021}. Moreover, the free left $n$-nilpotent and free left $n$-dinilpotent dimonoids were constructed in~\cite{ZhADM2013nil, ZhSF2016}. Representations of ordered dimonoids via binary relations were examined in~\cite{ZhAEJM2014}.
Significant contributions to the theory of endomorphisms and automorphisms in the context of dimonoids were made by Yu.~Zhuchok in~\cite{ZhBAS2014,ZhCA2017,ZhJA2024}.

In~\cite{Gdim1}, we studied the properties of dual dimonoids and, within the class of noncommutative dimonoids, constructed various examples of abelian, nonabelian, and rectangular dimonoids. The algebraic structure of these dimonoids was examined in detail, including computations of their automorphism groups and halos. In~\cite{Gdim2}, we built upon these results, as well as those obtained in~\cite{GR1, GDS2}, to provide a complete classification, up to isomorphism, of all two-element dimonoids, all commutative three-element dimonoids, and all abelian three-element dimonoids. 

In the present paper, we introduce new classes of dimonoids and determine their automorphism groups and halos. Using these constructions, we obtain a complete classification, up to isomorphism, of all nonabelian noncommutative nontrivial dimonoids of order~$3$, thereby resolving Problem~4.6 posed in~\cite{Gdim2} concerning this classification. As a consequence, we also provide a complete classification, up to isomorphism, of all three-element dimonoids. Finally, we present the results of computer computations determining the numbers of all pairwise nonisomorphic dimonoids of orders up to~$5$, as well as all pairwise nonisomorphic commutative, abelian, and rectangular dimonoids of orders up to~$6$. 

\section{Preliminaries on semigroups}

We shall follow the standard definitions and terminology of semigroup theory (see, for example, \cite{Howie}).

\smallskip

A semigroup $(S,*)$ is called a {\em null semigroup} if there exists an element $0\in S$ such that $x*y=0$ for all $x,y\in S$. In this case  $0$ is a zero of $S$.  By $O_{S^0}$ we denote a null semigroup with zero $0$ on a set $S$.  The null semigroups $O_{S^0}$ and $O_{T^z}$ are isomorphic if and only if $|S|=|T|$. If $S$ is a set of cardinality $|S|=n$, we use the notation $O_n$ for a representative of the class of semigroups isomorphic to $O_{S^0}$.

\smallskip

A semigroup $(S, *)$ is called \emph{rectangular}~\cite{ZhCA2017a} if every element of $S$ is a middle identity, that is,
$x*y*z = x*z$ for all  $x, y, z \in S$. In other words, the product in $S$ depends only on the first and the last factors. 
A nontrivial null semigroup is an  example of a commutative rectangular semigroup that has no identity element.
In~\cite{Nagy}, it was proved that a semigroup $(S,*)$ is rectangular if and only if the factor semigroup $S/\theta$ is a right zero semigroup, 
where $\theta = \{(a, b)\in S\times S : x*a = x*b \text{ for all } x\in S\}$. The authors also described a method for constructing all rectangular semigroups. In~\cite{GS_sie}, the study focused on rectangular semigroups, including rectangular ideal extensions of left (right) zero semigroups by null quotients, and provided new combinatorial  results for the numbers of pairwise nonisomorphic instances.

\smallskip

Let $S$ be a nonempty set, $0\in S$ and $A\subset S\setminus\{0\}$. Define  the binary operation $*$ on  $S$ in the following way:

$$x* y=\begin{cases}
x, \text{ if }y=x\in A \\
0,\ \text{otherwise}.
\end{cases}$$

It is easy to check that a set $S$ endowed with the operation $*$ is a commutative semigroup with  zero $0$, and we denote this semigroup by $O^A_{S^0}$. If $A=S\setminus\{0\}$, then $O^A_{S^0}$ is a semilattice. In the case when $A$ is an empty set,  $O^A_{S^0}$ coincides with a null  semigroup  with  zero $0$. The semigroups $O^A_{S^0}$ and $O^B_{T^z}$ are isomorphic if and only if $|S|=|T|$ and $|A|=|B|$. If $S$ is a finite set of cardinality $|S|=n$ and $|A|=m$,  we use the notation $O_n^m$ for a representative of the class of semigroups isomorphic to $O^A_{S^0}$.

\smallskip

If $(S,*)$ is a semigroup, then the semigroup $(S,{*}^d)$ with operation $x{*}^d y=y* x$ is called {\em dual} to $(S,*)$, denoted $(S,*)^d$. It follows that $(S,*)^d = (S,*)$ if and only if $(S,*)$ is a commutative semigroup, and $\Aut(S,*)^d=\Aut(S,*)$.

\smallskip

A semigroup $(S,*)$ is said to be a {\em left} (resp. {\em right}) {\em zero semigroup} if $a*b=a$ (resp. $a*b=b$) for any $a,b\in S$. We denote by  $LO_S$ and $RO_S$ a left zero semigroup and a right zero semigroup on a set $S$, respectively.  The semigroups $LO_S$ and $RO_S$ are dual rectangular bands. If $S$ is a  set of cardinality $|S|=n$,  we use the notations $LO_n$ and $RO_n$ for representatives of the classes of semigroups isomorphic to $LO_S$ and $RO_S$, respectively.

\smallskip

Let $S$ be a nonempty set, $A\subset S$ and $0\notin S$. Define  the binary operation $*$ on  $S^{\sim 0}=S\cup\{0\}$ in the following way:

$$x* y=\begin{cases}
x, \text{ if }y \in A \\
0, \text{ if }y \in S^{\sim 0}\setminus A.
\end{cases}$$

It is easy to check that a set $S^{\sim 0}$ endowed with the operation $*$ is a semigroup with  zero $0$, and we denote this semigroup by $LO^{\sim 0}_{A\leftarrow S}$. If  $A$ is an empty set,  then $LO^{\sim 0}_{A\leftarrow S}$ coincides with a null semigroup $O_{S^0}$ with  zero $0$. The semigroups $LO^{\sim 0}_{A\leftarrow S}$ and $LO^{\sim z}_{B\leftarrow T}$ are isomorphic if and only if $|S|=|T|$ and $|A|=|B|$. If $S$ is a finite set of cardinality $|S|=n$ and $|A|=m$,  we use the notation $LO^{\sim 0}_{m\leftarrow n}$ for a representative of the class of semigroups isomorphic to $LO^{\sim 0}_{A\leftarrow S}$.

By $RO^{\sim 0}_{A\leftarrow S}$ we denote a dual semigroup of $LO^{\sim 0}_{A\leftarrow S}$ and use the notation $RO^{\sim 0}_{m\leftarrow n}$ accordingly.
 
\smallskip 

Let  $a$ and $c$ be different elements of a set $S$. Define the associative binary operation $\dashv_c^a$ on  $S$ in the following way:

$$x\dashv_c^a y=\begin{cases}
a,\text{ if } x=y=a \\
c,\text{ if } x=a\text{ and } y\neq a\\
x,\text{ if } x\neq a.
\end{cases}$$

If $|S|\geq 3$, then $(S,\dashv_c^a)$ is a noncommutative band in which all elements $z\neq a$ are left zeros. We denote this band by $LOB_S^{a,c}$.
It is straightforward to verify that for any different $b, d\in S$, the semigroups $(S,\dashv_c^a)$ and $(S,\dashv_d^b)$ are
isomorphic. If $S$ is a finite set of cardinality $|S|=n$, we use the notation $LOB_n$ for a representative of the class of semigroups isomorphic to $LOB_S^{a,c}$. 

By $ROB_S^{a,c}$ we denote a dual semigroup of $LOB_S^{a,c}$ and use the notation $ROB_n$ accordingly.

\smallskip

Let $S$ be a nonempty set, $A \subseteq S$ a nonempty subset, and $a \in A$.
Define an associative binary operation $*$ on $S$ as follows:

$$x* y=\begin{cases}
x,\ \text{ if } x\in A \\
a,\ \text{ if } x \notin A.
\end{cases}$$

We denote the semigroup $(S,*)$  by $LO_{A_a\leftarrow S}$. It follows that  all elements $z\in A$ are left zeros of $LO_{A_a\leftarrow S}$. If $A=\{a\}$, then $LO_{A_a\leftarrow S}$ coincides with a null semigroup $O_{S^a}$ with zero $a$. If $A=S$, then $LO_{A_a\leftarrow S}$ coincides with a left zero semigroup $LO_S$. The semigroups $LO_{A_a\leftarrow S}$ and $LO_{B_b\leftarrow T}$  are isomorphic if and only if $|S|=|T|$ and $|A|=|B|$. If $S$ is a finite set of cardinality $|S|=n$ and $|A|=m$,  we use the notation $LO_{m\leftarrow n}$ for a representative of the class of semigroups isomorphic to  $LO_{A_a\leftarrow S}$.

By $RO_{A_a\leftarrow S}$ we denote a dual semigroup of $LO_{A_a\leftarrow S}$ and use the notation $RO_{m\leftarrow n}$ accordingly. Both $LO_{A_a\leftarrow S}$ and $RO_{A_a\leftarrow S}$ are rectangular semigroups. Each of them is a rectangular band precisely when $A=S$.

\bigskip

Following the algebraic tradition, we take for a model of the class of cyclic groups of order $n$ the multiplicative group
$C_n=\{z\in\mathbb C:z^n=1\}$ of $n$-th roots of $1$. For a set $X$, we denote by $S_X$  the group of all permutations of $X$.

\section{Some definitions and basic properties of dimonoids}

In this section, we recall several useful results on dimonoids  that will be  used in the subsequent investigations.

An element $e$ of a dimonoid $(D,\dashv, \vdash)$ is called a {\em bar-unit}~\cite{Lod} if $e \vdash d = d = d \dashv e$ for all $d\in D$. In contrast to monoids a dimonoid may have many bar-units. The set of all bar-units of a dimonoid $(D,\dashv, \vdash)$ is called the {\em halo} of $(D,\dashv, \vdash)$, denoted $\Halo(D,\dashv, \vdash)$. A nonempty subset $B\subset D$ is called a {\em subdimonoid} of a dimonoid $(D,\dashv, \vdash)$  if $a \dashv b, a \vdash b \in B$ for any $a, b \in B$. If the halo of $(D,\dashv, \vdash)$ is nonempty, then it is a subdimonoid of $(D,\dashv, \vdash)$.

An element $0\in D$  is called  a {\em   zero of a dimonoid} $(D,\dashv, \vdash)$~\cite{ZhADM2013nil} if $0$ is a zero of $(D,\dashv)$ and a  zero of $(D,\vdash)$. Let $(D,\dashv, \vdash)$ be a dimonoid and $0\notin D$. The binary operations defined on  $D$  can be extended to $D\cup\{0\}$ putting $0\dashv d=d\dashv 0=0=0\vdash d=d\vdash 0 $ for all $d\in D\cup \{0\}$. The notation $(D,\dashv, \vdash)^{+0}$  denotes a dimonoid $D\cup\{0\}$ obtained from $D$ by adjoining the extra zero $0$. 
It follows  that $\Halo((D,\dashv, \vdash)^{+0}) = \Halo(D,\dashv, \vdash)$.

\smallskip

A dimonoid $(D,\dashv, \vdash)$ is called {\em abelian}~\cite{ZhADM2015} if $x \dashv y = y \vdash x$ for all $x,y\in D$.

\smallskip

Let $(D,\dashv, \vdash)$ be a dimonoid. Define new operations $\dashv^d$ and  $\vdash^d$ on $D$ by 
$$x \dashv^d y = y \vdash x\ \ \text{  and  }\ \  x \vdash^d y = y \dashv x.$$
It is immediate to check that $(D,\dashv^d, \vdash^d)$ is a new dimonoid, called the {\em  dual dimonoid of $(D,\dashv, \vdash)$}~\cite{Lod}, which we denote by $(D,\dashv, \vdash)^d$. It follows that the unary duality operation $d: (D, \dashv, \vdash) \mapsto (D, \dashv, \vdash)^{d}$ is involutive in the sense that $((D,\dashv, \vdash)^d)^d=(D,\dashv, \vdash)$. In fact, $(D,\dashv, \vdash)^d$ is a dimonoid if and only if $(D,\dashv, \vdash)$ is a dimonoid.  As usual, a dimonoid $(D,\dashv, \vdash)$ is said to be {\em self-dual} if $(D,\dashv, \vdash)^d=(D,\dashv, \vdash)$. As established in~\cite{Gdim1}, a dimonoid $(D,\dashv, \vdash)$ is abelian if and only if it is self-dual, which in turn holds if and only if the semigroups $(D, \dashv)$ and $(D,\vdash)$ are dual to each other. Consequently, nonabelian dimonoids are divided into the pairs of dual dimonoids.

\smallskip

A dimonoid $(D,\dashv,\vdash)$ is called {\em commutative}~\cite{ZhADM2009} if both semigroups $(D,\dashv)$ and $(D,\vdash)$ are commutative.

\smallskip

Since commutative semigroups $(D,\dashv)$ and $(D,\vdash)$ are dual if and only if their operations coincide, all commutative nontrivial dimonoids are nonabelian, and all abelian nontrivial dimonoids are noncommutative. On the other hand,  all commutative trivial dimonoids are abelian and all noncommutative trivial dimonoids are nonabelian. A left zero and a right zero dimonoid $(D,\dashv,\vdash)$ with operations $x\dashv y = x$ and $x\vdash y = y$ \cite{Lod} is an example of a noncommutative abelian nontrivial dimonoid. In Section~\ref{sec:3el_dm}, we give examples of  nonabelian commutative dimonoids, see also~\cite{ZhADM2009}.

\bigskip

A map $\varphi : D_1 \to D_2$ is called a {\em homomorphism } from a dimonoid $(D_1,\dashv_1, \vdash_1)$ to a dimonoid $(D_2,\dashv_2, \vdash_2)$ if $$\varphi(a\dashv_1 b)=\varphi(a)\dashv_2\varphi(b)\ \ \text{  and  }\ \ \varphi(a\vdash_1 b)=\varphi(a)\vdash_2\varphi(b)$$ for all $a,b\in D_1$.

A bijective homomorphism $\psi : D_1 \to D_2$ is called an {\em isomorphism } from a dimonoid $(D_1,\dashv_1, \vdash_1)$ to a dimonoid $(D_2,\dashv_2, \vdash_2)$.
If there exists an isomorphism from a dimonoid $(D_1,\dashv_1, \vdash_1)$ to a dimonoid $(D_2,\dashv_2, \vdash_2)$, then $(D_1, \dashv_1, \vdash_1)$ and $(D_2, \dashv_2, \vdash_2)$ are said to be {\em isomorphic}, denoted $(D_1,\dashv_1, \vdash_1)\cong (D_2,\dashv_2, \vdash_2)$. An isomorphism $\psi: D\to D$ is called an {\em   automorphism} of a dimonoid $(D,\dashv, \vdash)$. By $\Aut(D,\dashv, \vdash)$ we denote the automorphism group of a dimonoid $(D,\dashv, \vdash)$. It follows that $\Aut((D,\dashv, \vdash)^{+0}) \cong \Aut(D,\dashv, \vdash)$.

\smallskip

The following proposition was proved in~\cite{Gdim1}.

\begin{proposition}\label{duality} Let $(D,\dashv, \vdash)$ be a dimonoid. Then $$\Aut(D,\dashv, \vdash)^d = \Aut(D,\dashv, \vdash)\ \text{ and }\ \Halo(D,\dashv, \vdash)^d = \Halo(D,\dashv, \vdash).$$ Moreover, $(D,\dashv, \vdash)$ is commutative if and only if $(D,\dashv, \vdash)^d$ is commutative.
\end{proposition}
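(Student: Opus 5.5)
The plan is to verify each of the three claims directly from the definitions $x\dashv^d y=y\vdash x$ and $x\vdash^d y=y\dashv x$. No earlier structural results are needed beyond these definitions and the involutivity of the duality operation.

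\emph{Automorphisms.} Let $\psi\in\Aut(D,\dashv,\vdash)$ and $x,y\in D$. Then
$$\psi(x\dashv^d y)=\psi(y\vdash x)=\psi(y)\vdash\psi(x)=\psi(x)\dashv^d\psi(y),$$
and the symmetric computation gives $\psi(x\vdash^d y)=\psi(x)\vdash^d\psi(y)$. Hence $\Aut(D,\dashv,\vdash)\subseteq\Aut(D,\dashv,\vdash)^d$, as sets of bijections of $D$ with the same composition. Applying this inclusion to the dual dimonoid and using $((D,\dashv,\vdash)^d)^d=(D,\dashv,\vdash)$ gives the reverse inclusion, so the two groups coincide.

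\emph{Halos.} An element $e$ is a bar-unit of the dual exactly when $e\vdash^d d=d=d\dashv^d e$ for all $d\in D$. Unwinding the definitions, this says $d\dashv e=d=e\vdash d$ for all $d$, which is precisely the condition for $e$ to be a bar-unit of $(D,\dashv,\vdash)$. So the two halos are equal.

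\emph{Commutativity.} The semigroup $(D,\dashv^d)$ is the dual semigroup of $(D,\vdash)$, and $(D,\vdash^d)$ is the dual semigroup of $(D,\dashv)$. A semigroup equals its dual if and only if it is commutative, so $(D,\dashv^d)$ is commutative iff $(D,\vdash)$ is, and $(D,\vdash^d)$ is commutative iff $(D,\dashv)$ is. Therefore $(D,\dashv,\vdash)^d$ is commutative if and only if $(D,\dashv,\vdash)$ is.

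There is no genuine obstacle here. The only point requiring care is keeping track of the swap of the operations together with the reversal of the arguments; the reverse inclusion for automorphisms is handled by involutivity rather than a separate computation.
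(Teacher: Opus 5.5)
Your proof is correct. The paper does not reprove this proposition but simply quotes it from \cite{Gdim1}, and a direct unwinding of $x\dashv^d y=y\vdash x$ and $x\vdash^d y=y\dashv x$, as you do, is all that is needed; note that your automorphism computation is just the case $D_1=D_2=D$ of Proposition~\ref{homo_dm_dual}, and in the commutativity part the only fact really used is that a semigroup is commutative if and only if its dual is, which is immediate from $x*^d y=y*x$.
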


\smallskip

The following proposition follows directly from the definitions.

\begin{proposition}\label{homo_dm_dual} Let $(D_1,\dashv_1, \vdash_1)$ and  $(D_2,\dashv_2, \vdash_2)$ be dimonoids. Then for a map $\varphi: D_1 \to D_2$ the following conditions are equivalent:
\begin{itemize}
\item[1)] $\varphi$ is a homomorphism from $(D_1,\dashv_1, \vdash_1)$ to $(D_2,\dashv_2, \vdash_2)$;
\item[2)] $\varphi$ is a homomorphism from $(D_1,\dashv_1, \vdash_1)^{d}$ to $(D_2,\dashv_2, \vdash_2)^{d}$.
\end{itemize}
\end{proposition}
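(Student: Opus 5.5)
The plan is to unwind the definitions and compare the two lists of conditions directly. By definition, $\varphi$ is a homomorphism from $(D_1,\dashv_1,\vdash_1)^d$ to $(D_2,\dashv_2,\vdash_2)^d$ precisely when
$$\varphi(a\dashv_1^d b)=\varphi(a)\dashv_2^d\varphi(b)\ \text{ and }\ \varphi(a\vdash_1^d b)=\varphi(a)\vdash_2^d\varphi(b)\ \text{ for all } a,b\in D_1.$$
Substituting the definitions $x\dashv^d y=y\vdash x$ and $x\vdash^d y=y\dashv x$ turns these into
$$\varphi(b\vdash_1 a)=\varphi(b)\vdash_2\varphi(a)\ \text{ and }\ \varphi(b\dashv_1 a)=\varphi(b)\dashv_2\varphi(a)\ \text{ for all } a,b\in D_1.$$

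Since $a$ and $b$ range over all of $D_1$, we may rename $(b,a)$ as $(a,b)$. The second system then becomes exactly the pair of conditions defining a homomorphism from $(D_1,\dashv_1,\vdash_1)$ to $(D_2,\dashv_2,\vdash_2)$, with the two conditions listed in the opposite order. This gives 1)$\Leftrightarrow$2) at once.

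No step is a real obstacle. The only point needing care is to keep track of the swap: the $\dashv^d$ condition turns into the $\vdash$ condition and the $\vdash^d$ condition into the $\dashv$ condition. One should also note that both sides quantify over all pairs in $D_1$, so the swap of arguments costs nothing. As a consistency check, the equivalence also follows by applying 1)$\Rightarrow$2) to the dual dimonoids and using the involutivity $((D,\dashv,\vdash)^d)^d=(D,\dashv,\vdash)$, so it is enough to prove one direction.
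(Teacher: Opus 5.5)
Your proposal is correct and matches the paper's approach: the paper simply states that the proposition follows directly from the definitions, and your unwinding of $\dashv^d$ and $\vdash^d$ followed by swapping the roles of the quantified variables is exactly that verification, carried out explicitly.
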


\begin{corollary}\label{iso_dm_dual} Let $(D_1,\dashv_1, \vdash_1)$ and  $(D_2,\dashv_2, \vdash_2)$ be dimonoids. Then   $(D_1,\dashv_1, \vdash_1)$ and $(D_2,\dashv_2, \vdash_2)$ are isomorphic if and only if $(D_1,\dashv_1, \vdash_1)^{d}$ and  $(D_2,\dashv_2, \vdash_2)^{d}$ are isomorphic.
\end{corollary}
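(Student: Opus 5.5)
The plan is to read the statement off Proposition~\ref{homo_dm_dual}, applied to a bijection and to its inverse. An isomorphism is by definition a bijective homomorphism, so the only thing to check is that the homomorphism property carries over between the original dimonoids and their duals; bijectivity is a property of the underlying map alone.

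For the forward direction, let $\psi: D_1\to D_2$ be an isomorphism from $(D_1,\dashv_1,\vdash_1)$ to $(D_2,\dashv_2,\vdash_2)$. By the implication 1)$\Rightarrow$2) of Proposition~\ref{homo_dm_dual}, the same map $\psi$ is a homomorphism from $(D_1,\dashv_1,\vdash_1)^d$ to $(D_2,\dashv_2,\vdash_2)^d$. It is still bijective, since the underlying sets of the duals are $D_1$ and $D_2$. Hence $\psi$ is an isomorphism of the dual dimonoids. If one prefers to see this directly, it is a one-line computation: $\psi(x\dashv_1^d y)=\psi(y\vdash_1 x)=\psi(y)\vdash_2\psi(x)=\psi(x)\dashv_2^d\psi(y)$, and the same for $\vdash^d$.

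For the converse, there are two equivalent routes. One is to apply the forward direction to the dual dimonoids and use the involutivity $((D,\dashv,\vdash)^d)^d=(D,\dashv,\vdash)$ noted earlier. The other is to use the implication 2)$\Rightarrow$1) of Proposition~\ref{homo_dm_dual} on the given bijection.

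There is no real obstacle here. The only point needing care is that the duals are defined on the same underlying sets, so that the isomorphism is literally the same map.
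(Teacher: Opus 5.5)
Your proposal is correct and matches the paper, which states this corollary as an immediate consequence of Proposition~\ref{homo_dm_dual}: an isomorphism is a bijective homomorphism, and the homomorphism property transfers verbatim between the dimonoids and their duals on the same underlying sets. Your direct computation and the converse via 2)$\Rightarrow$1) or involutivity are both fine; the mention of the inverse map in your opening sentence is unnecessary, since you never use it.
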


\medskip

For a dimonoid $(D,\dashv,\vdash)$, if $\mathbb S$ and $\mathbb T$ denote the semigroups $(D,\dashv)$ and $(D,\vdash)$, respectively, then $\mathbb S \rbag \mathbb T$ stands for the dimonoid $(D,\dashv,\vdash)$.

\smallskip

 A dimonoid $(D,\dashv, \vdash)$ is said to be {\em rectangular} if both $(D,\dashv)$ and $(D,\vdash)$ are rectangular semigroups. Well-known examples of rectangular dimonoids are dimonoids $LO_D\rbag RO_D$, $O_D\rbag RO_D$ and $LO_D\rbag O_D$, see~\cite{Lod, ZhADM2009, ZhADM2011a}.

\medskip

A $g$-dimonoid $(D, \dashv, \vdash)$ is said to be  {\em iso-dual} if it is isomorphic to its dual dimonoid $(D, \dashv, \vdash)^d$.

\medskip

Theorem~\ref{char_iso_com_rec_gdm} was proved in~\cite{Ggdim1}.
 
\begin{theorem}\label{char_iso_com_rec_gdm} Let $\kappa>1$ be a cardinal, and let $D$ be a set of cardinality $|D|=\kappa$ with distinct elements $0,z\in D$. Up to isomorphism, there exist exactly two commutative rectangular $g$-dimonoids of order $\kappa$: the trivial one $O_{D^0}$ with $\Aut(O_{D^0})\cong S_{D\setminus\{0\}}$, and the  nonabelian iso-dual nontrivial $g$-dimonoid $O_{D^0} \rbag O_{D^z}$ with $\Aut(O_{D^0} \rbag O_{D^z})\cong S_{D\setminus\{0,z\}}$, which is not a dimonoid.
\end{theorem}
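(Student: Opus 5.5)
The plan is to first classify the commutative rectangular semigroups, then determine which pairs of them satisfy the $g$-dimonoid axioms $(D_1)$ and $(D_3)$, and finally compute automorphism groups and check the remaining claims.

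\emph{Step 1: commutative rectangular semigroups.} Let $(D,*)$ be commutative and rectangular, so $x*y*z=x*z$. Then
$$x*y=x*(y*y)*y... $$
More directly, for any $x,y$ we have $x*y=y*x=(y*x)*(y*x)$? I will argue as follows. Rectangularity gives $x*y = x*z*y$ for every $z$, and commutativity lets us rewrite
$$x*y = x*(z*y)=x*(y*z)=x*z,$$
again by rectangularity. Hence $x*y=x*z$ for all $y,z$, so the product depends only on $x$. By symmetry it depends only on $y$ as well, so it is a constant $0$. Thus every commutative rectangular semigroup is a null semigroup $O_{D^0}$.

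\emph{Step 2: which pairs form $g$-dimonoids.} A commutative rectangular $g$-dimonoid is therefore $O_{D^a}\rbag O_{D^b}$. Axiom $(D_1)$ gives
$$(x\dashv y)\dashv z=a=x\dashv(y\vdash z),$$
which holds automatically, and $(D_3)$ holds likewise. So every pair $(a,b)$ works. Axiom $(D_2)$ reads $(x\vdash y)\dashv z=a$ versus $x\vdash(y\dashv z)=b$, so it holds iff $a=b$. Consequently, when $a\neq b$ the structure is a $g$-dimonoid but not a dimonoid.

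\emph{Step 3: isomorphism types.} Any bijection $D\to D$ moving $a,b$ to $0,z$ is an isomorphism, so up to isomorphism there are exactly two cases: the trivial one $O_{D^0}$, and $O_{D^0}\rbag O_{D^z}$ (which requires $\kappa>1$). The dual of $O_{D^0}\rbag O_{D^z}$ is $O_{D^z}\rbag O_{D^0}$. The transposition of $0$ and $z$ is an isomorphism between them, so the structure is iso-dual. It is not equal to its dual, hence nonabelian. It is nontrivial because the two operations differ.

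\emph{Step 4: automorphism groups.} An automorphism must fix the zero of each operation. So $\Aut(O_{D^0})$ consists of the permutations fixing $0$, giving $S_{D\setminus\{0\}}$. Likewise $\Aut(O_{D^0}\rbag O_{D^z})$ consists of the permutations fixing both $0$ and $z$, giving $S_{D\setminus\{0,z\}}$.

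The only nonroutine step is Step 1. Its key observation is that commutativity turns rectangularity into independence from the second factor.
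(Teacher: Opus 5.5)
Your proof is correct, and the paper gives nothing to compare it against: it quotes this theorem from an earlier paper on $g$-dimonoids without proof. Your argument is the natural direct one. Commutativity plus rectangularity gives $x*y=x*(z*y)=x*(y*z)=x*z$, so the product is constant. Then every $O_{D^a}\rbag O_{D^b}$ satisfies $(D_1)$ and $(D_3)$, and satisfies $(D_2)$ if and only if $a=b$. This establishes every claim in the statement.

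For a clean write-up:
\begin{itemize}
\item Delete the two abandoned false starts at the beginning of Step~1.
\item State explicitly that the two classes are non-isomorphic, because triviality ($\dashv=\vdash$) is preserved under isomorphism.
\item In Step~4, add the converse: every permutation fixing the zero(s) is an automorphism, which is immediate since all products are constant.
\end{itemize}
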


Because the nonabelian iso-dual nontrivial $g$-dimonoid $O_{D^0} \rbag O_{D^z}$ is not a dimonoid, the following corollary is obtained.

\begin{corollary}\label{char_iso_com_rec_dm} Let $\kappa$ be a cardinal, and let $D$ be a set with $|D| = \kappa$.
Up to isomorphism, there exists exactly one commutative rectangular dimonoid of order $\kappa$, namely the trivial null dimonoid $O_{D^0}$, whose automorphism group is isomorphic to $S_{D \setminus \{0\}}$.
\end{corollary}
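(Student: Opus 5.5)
The corollary is obtained by specialising Theorem~\ref{char_iso_com_rec_gdm} to dimonoids. Every dimonoid is a $g$-dimonoid, since the $g$-dimonoid axioms are the dimonoid axioms with $(D_2)$ dropped. Hence every commutative rectangular dimonoid of order $\kappa$ is a commutative rectangular $g$-dimonoid of order $\kappa$. By Theorem~\ref{char_iso_com_rec_gdm}, for $\kappa>1$ it is therefore isomorphic to either $O_{D^0}$ or $O_{D^0}\rbag O_{D^z}$.

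The only thing that needs checking is that $O_{D^0}\rbag O_{D^z}$ is not a dimonoid. The theorem already asserts this, but it is easy to confirm directly via $(D_2)$. Take $x,y,z'\in D$. Then
$(x\vdash y)\dashv z' = z\dashv z' = 0$, whereas $x\vdash(y\dashv z') = x\vdash 0 = z$. Since $0\neq z$, axiom $(D_2)$ fails, so this candidate is excluded.

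It remains to check that $O_{D^0}$, with both operations equal to the null multiplication, is a dimonoid and is commutative and rectangular. Trivial dimonoids $(S,*,*)$ satisfy $(D_1)$–$(D_3)$ by associativity. The null semigroup is commutative, and it is rectangular because $x*y*z=0=x*z$. So up to isomorphism $O_{D^0}$ is the unique commutative rectangular dimonoid of order $\kappa>1$.

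Its automorphism group is the one stated in the theorem, $S_{D\setminus\{0\}}$. Directly: any bijection must fix the unique zero $0$, and any permutation of $D\setminus\{0\}$ preserves the constant product.

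The degenerate cases $\kappa\le1$, which fall outside the theorem, are handled separately. For $\kappa=1$ the one-element dimonoid is null, with trivial automorphism group $S_\emptyset$. If $\kappa=0$ is allowed, the statement holds vacuously in the same way. The only real step is the verification of $(D_2)$ above, and it is immediate.
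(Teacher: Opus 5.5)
Your proposal is correct and follows the paper's route: the paper obtains the corollary in one line from Theorem~\ref{char_iso_com_rec_gdm} by noting that $O_{D^0}\rbag O_{D^z}$ is not a dimonoid. Your explicit failure of $(D_2)$ via $(x\vdash y)\dashv z'=0\neq z=x\vdash(y\dashv z')$ and your separate treatment of $\kappa=1$ fill in details the paper leaves implicit. One small quibble: for $\kappa=0$ the notation $O_{D^0}$ already presupposes an element $0\in D$, so that case is excluded by the statement rather than holding vacuously.
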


\section{Some classes of nonabelian noncommutative dimonoids}\label{sec:cnonabelian2}

In this section, we construct new classes of nonabelian noncommutative dimonoids and determine their halos and automorphism groups, which play an important role in the classification of dimonoids by capturing their symmetries and identifying structurally equivalent objects.

 It is evident that every abelian (self-dual)  dimonoid is iso-dual. Theorem~\ref{iso-dual_LO} below provides examples of iso-dual dimonoids that are not abelian.

\begin{theorem}\label{iso-dual_LO}
Let $D$ be a set, $A$ and $B$ be subsets of $D$ of cardinality $|A|=|B|\geq 2$, $c\in A\cap B$, and $a\in A$, $b\in B$ be different elements  such that $A\setminus\{a\} = B\setminus\{b\}$. An algebraic structure $LO_{A_c\leftarrow D} \rbag RO_{B_c\leftarrow D} = (D, \dashv, \vdash)$, where

\begin{center}
$x\dashv y=\begin{cases}
x,\text{ if } x\in A \\
c,\text{ if } x\in D\setminus A
\end{cases}$
and\ \ \ \  
$x\vdash y=\begin{cases}
y,\text{ if } y\in B \\
c,\text{ if } y\in D\setminus B
\end{cases}$
\end{center}

\noindent is an iso-dual nonabelian noncommutative rectangular dimonoid with empty halo and $\Aut(LO_{A_c\leftarrow D} \rbag RO_{B_c\leftarrow D})\cong S_{A\setminus\{a,c\}}\times S_{D\setminus(A\cup B)}$.

\end{theorem}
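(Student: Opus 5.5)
The plan is to verify everything directly from the defining formulas, after first pinning down how the sets $A$ and $B$ sit relative to each other. Put $C=A\setminus\{a\}=B\setminus\{b\}$. Since $a\neq b$ and $a\notin A\setminus\{a\}=B\setminus\{b\}$, we get $a\notin B$, and symmetrically $b\notin A$. Hence $A=C\cup\{a\}$, $B=C\cup\{b\}$ and $A\cap B=C$. Because $c\in A\cap B$, also $c\in C$, so $c\neq a,b$. Both $(D,\dashv)=LO_{A_c\leftarrow D}$ and $(D,\vdash)=RO_{B_c\leftarrow D}$ are rectangular semigroups, as recalled in the preliminaries. So rectangularity and associativity are already known, and only $(D_1)$--$(D_3)$ need checking.

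For the axioms, I would first note that $x\dashv y$ does not depend on $y$, and that $c\in A$, $c\in B$. For $(D_1)$, both sides equal $x$ if $x\in A$ and $c$ otherwise. For $(D_3)$, both sides equal $z$ if $z\in B$ and $c$ otherwise. The only genuine case analysis is $(D_2)$, on the middle variable $y$:
\begin{itemize}
\item If $y\in C$, both sides equal $y$.
\item If $y=b$, the left side is $b\dashv z=c$ because $b\notin A$. On the right, $b\dashv z=c$ and then $x\vdash c=c$.
\item If $y=a$, the left side is $c\dashv z=c$. The right side is $x\vdash a=c$ because $a\notin B$.
\item If $y\notin A\cup B$, both sides equal $c$.
\end{itemize}
This is exactly where the hypothesis $A\setminus\{a\}=B\setminus\{b\}$ is used, so I expect $(D_2)$ to be the step needing most care.

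Next come the qualitative properties. The structure is nonabelian since $a\dashv y=a$ while $y\vdash a=c\neq a$. It is noncommutative since $a\dashv c=a\neq c=c\dashv a$. The halo is empty, because a bar-unit $e$ would need $e\vdash a=a$, yet $e\vdash a=c$ since $a\notin B$. For iso-duality, the dual has $x\dashv^d y=y\vdash x$, which is $x$ if $x\in B$ and $c$ otherwise, and $x\vdash^d y=y\dashv x$, which is $y$ if $y\in A$ and $c$ otherwise. So the dual is $LO_{B_c\leftarrow D}\rbag RO_{A_c\leftarrow D}$. The transposition $\tau$ swapping $a$ and $b$ fixes $c$ and maps $A$ onto $B$ and $B$ onto $A$. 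Hence $\tau$ is an isomorphism onto the dual, and this follows immediately from the formulas.

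For the automorphism group, let $\varphi\in\Aut$. Then $\varphi$ preserves the image $D\dashv D=A$ and the image $D\vdash D=B$. Therefore it preserves $C$, $\{a\}$, $\{b\}$ and $D\setminus(A\cup B)$, so it fixes $a$ and $b$. Moreover $c=a\vdash a$, hence $\varphi(c)=\varphi(a)\vdash\varphi(a)=c$. Thus $\varphi$ restricts to a permutation of $A\setminus\{a,c\}=C\setminus\{c\}$ and a permutation of $D\setminus(A\cup B)$. Conversely, any bijection fixing $a,b,c$ and preserving these two sets preserves both operations, since the operations are defined purely in terms of membership in $A$ and $B$ and the constant $c$. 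This gives $\Aut\cong S_{A\setminus\{a,c\}}\times S_{D\setminus(A\cup B)}$.
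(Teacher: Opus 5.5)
Your proposal is correct and follows essentially the same route as the paper: the same four-case check of $(D_2)$ on the middle variable, the same transposition of $a$ and $b$ as an isomorphism onto the dual $LO_{B_c\leftarrow D}\rbag RO_{A_c\leftarrow D}$, and the same reduction of automorphisms to bijections fixing $a$, $b$, $c$ and preserving $A\setminus\{a,c\}$ and $D\setminus(A\cup B)$. The differences are cosmetic. You evaluate $(D_1)$ and $(D_3)$ directly, you use a $\vdash$-side witness for the empty halo, and you obtain $\varphi(A)=A$, $\varphi(B)=B$ from the images $D\dashv D=A$ and $D\vdash D=B$ rather than from left and right zeros. That last choice is slightly tighter, since the paper only remarks that $A$ is a subset of the left zeros.
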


\begin{proof} Let us show that $(D, \dashv, \vdash)$ is a dimonoid. It follows directly from the definition of the dimonoid operations that, for all $x,y,z\in D$, 
$$x \dashv (y \dashv z) = x \dashv (y \vdash z)\ \text{ and }\ (x \dashv y) \vdash z = (x \vdash y) \vdash z.$$ 
Therefore, the axioms $(D_1)$ and $(D_3)$ are satisfied.

Let us verify the axiom $(D_2)$ by considering four separate cases. 

{\noindent \bf Case 1}. Let $y\in A\cap B$. Then 
$$(x \vdash y) \dashv z = y \dashv z = y = x \vdash y = x \vdash (y \dashv z)$$ 
for all $x,z\in D$.
Hence, in this case, the axiom $(D_2)$ holds.

{\noindent \bf Case 2}. Let $y\in A\setminus B=\{a\}$. It follows that 
$$(x \vdash a) \dashv z = c \dashv z = c = x \vdash a = x \vdash (a \dashv z)$$ 
for all $x,z\in D$. Therefore, in this case, the axiom $(D_2)$ is satisfied.

{\noindent \bf Case 3}. Let $y\in B\setminus A=\{b\}$. Then 
$$(x \vdash b) \dashv z = b \dashv z = c = x \vdash c = x \vdash (b \dashv z)$$ 
for all $x,z\in D$. Thus, in this case, the axiom $(D_2)$ holds.

{\noindent \bf Case 4}. Let $y\in D\setminus (A\cup B)$. It follows that 
$$(x \vdash y) \dashv z = c \dashv z = c = x \vdash c = x \vdash (y \dashv z)$$
 for all $x,z\in D$. We conclude that axiom $(D_2)$ is satisfied.

\smallskip

Since both semigroups $(D, \dashv)$ and $(D, \vdash)$ are rectangular, $(D, \dashv, \vdash)$ is a rectangular dimonoid.

\smallskip

Taking into account that $a\dashv b = a \ne c = b \vdash a$ and $a\dashv b = a \ne c = b \dashv a$, we conclude that $(D, \dashv, \vdash)$ is a nonabelian noncommutative dimonoid. 

\smallskip

Consider its dual dimonoid $LO_{B_c\leftarrow D} \rbag RO_{A_c\leftarrow D} = (D, \dashv^d, \vdash^d)$ with operations
\begin{center}
$x\dashv^d y=\begin{cases}
x,\text{ if } x\in B \\
c,\text{ if } x\in D\setminus B
\end{cases}$
and\ \ \ \  
$x\vdash^d y=\begin{cases}
y,\text{ if } y\in A \\
c,\text{ if } y\in D\setminus A.
\end{cases}$
\end{center}

One can check that the bijective map  $\psi:D\to D$ defined by $\psi(a)=b$, $\psi(b)=a$, and $\psi(x)=x$ for all $x\in D\setminus\{a,b\}$, is an isomorphism from $(D, \dashv, \vdash)$ to $(D, \dashv^d, \vdash^d)$.

\smallskip
 
Since the  semigroup $(D, \dashv)$ contains no right identities, the halo of the dimonoid $LO_{A_c\leftarrow D} \rbag RO_{B_c\leftarrow D}$ is empty.

\smallskip

Let $\psi$ be an arbitrary automorphism of the dimonoid $(D,\dashv, \vdash)$. Then $\psi$ is an automorphism of the semigroup $(D, \dashv)$ and $\psi$ is an automorphism of the semigroup $(D, \vdash)$. Since automorphisms preserve left zeros and $A$ is a subset of left zeros of $(D, \dashv)$, it follows that $\psi(A)=A$, $\psi(D\setminus A)= D\setminus A$, and hence $\psi(a)\in A$, $\psi(b)\in D\setminus A$. Taking into account that automorphisms preserve right zeros and $B$ is a subset of right zeros of $(D, \vdash)$, it follows that $\psi(B)=B$, $\psi(D\setminus B)= D\setminus B$, and hence $\psi(b)\in B$, $\psi(a)\in D\setminus B$. We conclude that $\psi(a)\in A\setminus B=\{a\}$ and $\psi(b)\in B\setminus A=\{b\}$, and thus $\psi(a) = a$ and $\psi(b) = b$. It follows also that $\psi(c)=\psi(c\vdash a)=\psi(c)\vdash \psi(a)= \psi(c)\vdash a = c$. Consequently, $\psi(A\setminus\{a,c\})= A\setminus\{a,c\}$ and $\psi(D\setminus (A\cup B))=D\setminus (A\cup B)$.

On the other hand, let $f$ be any bijection of $D$ such that $f(A\setminus\{a,c\})= A\setminus\{a,c\}$, $f(D\setminus (A\cup B))=D\setminus (A\cup B)$ and $f(a)= a$, $f(b)= b$, $f(c)=c$.  If $x\in A\setminus\{a,c\}$, then $f(x)\in A\setminus\{a,c\}$, and hence $f(x\dashv y)= f(x)= f(x)\dashv f(y)$ for all $y\in D$. By analogy, if $y\in A\setminus\{a,c\}= B\setminus\{b,c\}$, then $f(y)\in B\setminus\{b,c\}$, and hence $f(x\vdash y)= f(y)= f(x)\vdash f(y)$ for all $x\in D$.
In the case $x\in D\setminus (A\cup B)$ we have that $f(x)\in D\setminus (A\cup B)$, and thus $f(x\dashv y)= f(c)= c = f(x)\dashv f(y)$ for all $y\in D$. If $y\in D\setminus (A\cup B)$, then $f(y)\in D\setminus (A\cup B)$, and thus $f(x\vdash y)= f(c)= c = f(x)\vdash f(y)$ for all $x\in D$.
It follows that any bijection of $A\setminus\{a,c\}$ and any bijection of $D\setminus (A\cup B)$ generate an automorphism of the dimonoid $LO_{A_c\leftarrow D} \rbag RO_{B_c\leftarrow D}$. Therefore, $\Aut(LO_{A_c\leftarrow D} \rbag RO_{B_c\leftarrow D}) \cong S_{A\setminus\{a,c\}}\times S_{D\setminus(A\cup B)}$.
\end{proof}

In Theorem~\ref{LOB_RO_a_D} and Corollary~\ref{LO_a_D_ROB} we construct nonabelian dimonoids with nonempty halo.

\begin{theorem}\label{LOB_RO_a_D}
Let $D$ be a set of cardinality $|D|>1$,  $0\notin D$,  $D^{\sim 0}=D\cup\{0\}$, and $a\in D$, $c\in D^{\sim 0}$ be different elements. An algebraic structure $LOB_{D^{\sim 0}}^{a,c} \rbag RO^{\sim 0}_{\{a\}\leftarrow D} =(D^{\sim 0}, \dashv, \vdash)$, where

\begin{center}
$x\dashv y=\begin{cases}
a,\text{ if } x=y=a \\
c,\text{ if } x=a\text{ and } y\neq a\\
x,\text{ if } x\in D^{\sim 0}\setminus \{a\}
\end{cases}$
and\ \ \ \  
$x\vdash y=\begin{cases}
y,\text{ if } x=a \\
0,\text{ if } x\in D^{\sim 0}\setminus \{a\}
\end{cases}$
\end{center}

\noindent is a nonabelian noncommutative dimonoid with $\Halo(LOB_{D^{\sim 0}}^{a,c} \rbag RO^{\sim 0}_{\{a\}\leftarrow D}) = \{a\}$ and $\Aut(LOB_{D^{\sim 0}}^{a,c} \rbag RO^{\sim 0}_{\{a\}\leftarrow D}) \cong S_{D^{\sim 0}\setminus\{a,c,0\}}$.
\end{theorem}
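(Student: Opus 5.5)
We first check that both operations are associative. For $(D^{\sim 0},\vdash)$ this holds because $RO^{\sim 0}_{\{a\}\leftarrow D}$ is the dual of $LO^{\sim 0}_{\{a\}\leftarrow D}$. Concretely, $x\vdash y = y$ when $x=a$ and $0$ otherwise. For $(D^{\sim 0},\dashv)$ we use the band $LOB^{a,c}_{D^{\sim 0}}$. Here $|D^{\sim 0}|\ge 3$, since $|D|>1$.

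Next we verify the axioms $(D_1)$--$(D_3)$ by splitting on whether the leftmost factor $x$ equals $a$, as the structure of $\vdash$ suggests.

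For $(D_1)$, $(x\dashv y)\dashv z = x\dashv(y\vdash z)$:
\begin{itemize}
\item If $x\ne a$, both sides equal $x$.
\item If $x=a$, the left side is $a$ exactly when $y=a$ and $z=a$, and $c$ otherwise (when $y\ne a$ we have $a\dashv y=c\neq a$, and $c\dashv z=c$). The right side is $a$ exactly when $y\vdash z=a$, i.e.\ when $y=a$ and $z=a$, because $y\vdash z\in\{z,0\}$ and $0\ne a$. So the sides agree.
\end{itemize}

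For $(D_2)$, $(x\vdash y)\dashv z = x\vdash(y\dashv z)$:
\begin{itemize}
\item If $x\ne a$, the left side is $0\dashv z = 0$ (note $0\ne a$, so $0$ is a left zero), and the right side is $0$.
\item If $x=a$, both sides equal $y\dashv z$.
\end{itemize}

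For $(D_3)$, $(x\dashv y)\vdash z = x\vdash(y\vdash z)$:
\begin{itemize}
\item If $x\ne a$, the left side is $x\vdash z=0$ and the right side is $0$.
\item If $x=a$ and $y=a$, both sides equal $z$.
\item If $x=a$ and $y\ne a$, the left side is $c\vdash z$, which is $0$ since $c\ne a$. The right side is $y\vdash z=0$. They agree.
\end{itemize}
This case analysis is the main, though routine, verification. The only delicate point is that $c\ne a$ and $0\ne a$ make $c\vdash z=0$ and $0\dashv z=0$.

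For the remaining properties we pick any $d\in D\setminus\{a\}$, which exists because $|D|>1$.
\begin{itemize}
\item \emph{Nonabelian:} $a\dashv d = c$, while $d\vdash a = 0$.
\item \emph{Noncommutative:} $a\dashv d=c\neq d=d\dashv a$. If $c=d$, we instead use $a\vdash d=d\ne 0 = d\vdash a$.
\item \emph{Halo:} $a$ is a bar-unit, since $a\vdash x=x$ and $x\dashv a=x$ for all $x$ (for $x\ne a$ because $x$ is a left zero, and $a\dashv a=a$). Conversely, a bar-unit $e$ must satisfy $e\vdash d=d\ne0$, which forces $e=a$. Hence $\Halo=\{a\}$.
\end{itemize}

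For the automorphism group, any automorphism $\psi$ fixes the unique bar-unit $a$, and fixes $0$ as the zero of $(D^{\sim 0},\vdash)$. It then fixes $c=a\dashv d$ for $d \ne a$: indeed $\psi(c)=\psi(a)\dashv\psi(d)=a\dashv\psi(d)=c$, since $\psi(d)\neq a$. Conversely, let $f$ be a bijection fixing $a$, $c$ and $0$. It preserves $\dashv$, because the $\dashv$-values depend only on whether the arguments equal $a$, and otherwise return the left factor. It preserves $\vdash$ similarly. Hence $\Aut\cong S_{D^{\sim 0}\setminus\{a,c,0\}}$. If $c=0$, the set $\{a,c,0\}$ is just $\{a,0\}$ and the same argument applies.
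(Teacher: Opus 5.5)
Your verification of $(D_1)$--$(D_3)$, of the halo, and of the automorphism group is correct and is essentially the paper's argument. The paper splits $(D_3)$ on $y$ instead of $x$, and gets $\psi(a)=a$ from $a$ being the unique right identity of $(D^{\sim 0},\dashv)$ rather than the unique bar-unit; these differences are cosmetic.

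The one step that fails is your witness for nonabelianness. You compare $a\dashv d=c$ with $d\vdash a=0$, but the hypotheses only require $c\neq a$, so $c=0$ is permitted. In that case $a\dashv d=0=d\vdash a$, and this pair proves nothing. This is not an empty corner case. When $c$ coincides with the zero of $(D^{\sim 0},\vdash)$, one gets a dimonoid not isomorphic to those with $c\neq 0$ (cf.\ Proposition~\ref{iso_LOB_RO}). For $|D^{\sim 0}|=3$ it is the entry $LOB^{a,c}_{\{a,c,0\}}\rbag RO^{\sim c}_{\{a\}\leftarrow\{a,0\}}$ of Table~\ref{tab:noncommdim3}. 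You note at the end that $c=0$ can occur, but you did not go back to this step.

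The repair is one line. For $d\in D\setminus\{a\}$ we have $d\dashv d=d$ and $d\vdash d=0$, and $d\neq 0$ because $0\notin D$. Hence $d\dashv d\neq d\vdash d$ for every admissible $c$. Equivalently, $(D^{\sim 0},\dashv)$ is a band while $(D^{\sim 0},\vdash)$ is not idempotent, so the two semigroups cannot be dual; this is the terser reason the paper gives.
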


\begin{proof} Let us show that $(D^{\sim 0}, \dashv, \vdash)$ is a dimonoid dividing into cases of axioms of a dimonoid. 

{\noindent \bf Case $(D_1)$}. If $x\ne a$, then $x$ is a left zero of a semigroup $(D^{\sim 0}, \dashv)$, and hence
$(x \dashv y) \dashv z = x \dashv (y \dashv z)= x = x \dashv (y \vdash z)$ for all $y,z\in D^{\sim 0}$.
If $x=a$, then $(x \dashv y) \dashv z = x \dashv (y \dashv z), x \dashv (y \vdash z)\in\{a,c\}$ for any $y,z\in D^{\sim 0}$. Since $(x \dashv y) \dashv z=a$ if and only if $x=y=z=a$, $x \dashv (y \vdash z)=a$ if and only if $x=y=z=a$, we conclude that $(D_1)$ holds.

{\noindent \bf Case $(D_2)$}. Since $a$ is a left identity of $(D^{\sim 0}, \vdash)$, it follows  $(a \vdash y) \dashv z = y \dashv z = a \vdash (y \dashv z)$ for all $y,z\in D^{\sim 0}$. If $x\ne a$, then $(x \vdash y) \dashv z = 0 \dashv z = 0 = x \vdash (y \dashv z)$ for all $y,z\in D^{\sim 0}$. We conclude that $(D_2)$ holds. 

{\noindent \bf Case $(D_3)$}. Taking into account that $a$ is a right identity of $(D^{\sim 0}, \dashv)$ and a left identity of $(D^{\sim 0}, \vdash)$, we conclude that
$(x \dashv a) \vdash z = x \vdash z = x \vdash (a \vdash z)$ for all $x,z\in D^{\sim 0}$. If $y\ne a$, then $x \dashv y \ne a$ and $y\vdash z = 0$ for all $x,z\in D^{\sim 0}$, and hence $(x \dashv y) \vdash z = 0 = x \vdash 0 = x \vdash (y \vdash z)$ for all $x,z\in D^{\sim 0}$. It follows that $(D_3)$ holds.

\smallskip

Taking into account that semigroups $(D^{\sim 0}, \dashv)$  and $(D^{\sim 0}, \vdash)$ are not dual, we conclude that $(D^{\sim 0}, \dashv, \vdash)$ is a nonabelian dimonoid. Since $(D^{\sim 0}, \dashv)$ is a noncommutative semigroup, $(D^{\sim 0}, \dashv, \vdash)$ is a noncommutative dimonoid.
 
Since  $a$ is a unique right identity of $(D^{\sim 0}, \dashv)$ and a unique left identity of $(D^{\sim 0}, \vdash)$, the halo of the dimonoid $LOB_{D^{\sim 0}}^{a,c} \rbag RO^{\sim 0}_{\{a\}\leftarrow D}$ is equal to $\{a\}$.

Let $\psi$ be an arbitrary automorphism of the dimonoid $(D^{\sim 0}, \dashv, \vdash)$. Then $\psi$ is an automorphism of the semigroup $(D^{\sim 0}, \dashv)$ and $\psi$ is an automorphism of the semigroup $(D^{\sim 0}, \vdash)$. Since automorphisms preserve zeros and $0$ is a zero of $(D^{\sim 0}, \vdash)$, we conclude that $\psi(0) = 0$. 
Since $a$ is a unique right identity of $(D^{\sim 0}, \dashv)$ and automorphisms preserve right identities, it follows that $\psi(a)= a$. Fix any $d\ne a$. It follows that $\psi(d)\ne a$, and hence $\psi(c)=\psi(a\dashv d) = \psi(a)\dashv \psi(d)= a\dashv \psi(d) = c$.

On the other hand, let $f$ be any bijection of $(D^{\sim 0}, \dashv, \vdash)$ such that $f(a)= a$, $f(c)=c$ and $f(0)=0$.  
If $x\ne a$, then $f(x)\ne a$, and hence $f(x\dashv y)= f(x)= f(x)\dashv f(y)$ and $f(x\vdash y)= f(0)= 0 = f(x)\vdash f(y)$ for all $y\in D^{\sim 0}$. If $y\ne a$, then $f(y)\ne a$, and thus $f(a\dashv y)= f(c)= c = a\dashv f(y) = f(a)\dashv f(y)$ for all $y\in D^{\sim 0}$. It follows also that $f(a\dashv a)= f(a)= a = a\dashv a = f(a)\dashv f(a)$. Moreover, $f(a\vdash y)= f(y)= a\vdash f(y) = f(a)\vdash f(y)$ for all $y\in D^{\sim 0}$.

Therefore, any bijection of $D^{\sim 0}$ that preserves $a$, $c$ and $0$ generates an automorphism of the dimonoid $LOB_{D^{\sim 0}}^{a,c} \rbag RO^{\sim 0}_{\{a\}\leftarrow D}$. Consequently, $\Aut(LOB_{D^{\sim 0}}^{a,c} \rbag RO^{\sim 0}_{\{a\}\leftarrow D}) \cong S_{D^{\sim 0}\setminus\{a,c,0\}}$.
\end{proof}

Theorem~\ref{LOB_RO_a_D} and Proposition~\ref{duality} yield the following corollary.

\begin{corollary}\label{LO_a_D_ROB}
Let $D$ be a set of cardinality $|D|>1$,  $0\notin D$,  $D^{\sim 0}=D\cup\{0\}$, and $a\in D$, $c\in D^{\sim 0}$ be different elements. An algebraic structure $LO^{\sim 0}_{\{a\}\leftarrow D} \rbag ROB_{D^{\sim 0}}^{a,c} =(D^{\sim 0}, \dashv, \vdash)$, where

\begin{center}
$x\dashv y=\begin{cases}
x,\text{ if } y=a \\
0,\text{ if } y\in D^{\sim 0}\setminus \{a\}
\end{cases}$
and\ \ \ \
$x\vdash y=\begin{cases}
a,\text{ if } x=y=a \\
c,\text{ if } x\neq a\text{ and }y=a \\
y,\text{ if } y\in D^{\sim 0}\setminus \{a\}
\end{cases}$

\end{center}

\noindent is a nonabelian noncommutative dimonoid with $\Halo(LO^{\sim 0}_{\{a\}\leftarrow D} \rbag ROB_{D^{\sim 0}}^{a,c}) = \{a\}$ and $\Aut(LO^{\sim 0}_{\{a\}\leftarrow D} \rbag ROB_{D^{\sim 0}}^{a,c})\cong S_{D^{\sim 0}\setminus\{a,c,0\}}$.
\end{corollary}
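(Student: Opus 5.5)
The plan is to obtain Corollary~\ref{LO_a_D_ROB} by dualizing Theorem~\ref{LOB_RO_a_D} and transferring each property through Proposition~\ref{duality}. I would not re-verify the dimonoid axioms directly.

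\textbf{Step 1: identify the structure as a dual.} Write $(D^{\sim 0},\dashv',\vdash')$ for the dimonoid $LOB_{D^{\sim 0}}^{a,c} \rbag RO^{\sim 0}_{\{a\}\leftarrow D}$ of Theorem~\ref{LOB_RO_a_D}. I would compute its dual operations $x\dashv'^d y = y\vdash' x$ and $x\vdash'^d y = y\dashv' x$.

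\textbf{Step 2: compare the operations.} From the formulas of Theorem~\ref{LOB_RO_a_D}:
\begin{itemize}
\item $y\vdash' x$ equals $x$ if $y=a$ and $0$ otherwise, which is exactly $x\dashv y$ in the statement, i.e.\ $LO^{\sim 0}_{\{a\}\leftarrow D}$;
\item $y\dashv' x$ equals $a$ if $x=y=a$, equals $c$ if $y=a\neq x$, and equals $y$ if $y\neq a$, which is exactly $x\vdash y$ in the statement, i.e.\ $ROB_{D^{\sim 0}}^{a,c}$.
\end{itemize}
Hence the structure of the corollary coincides with $(LOB_{D^{\sim 0}}^{a,c} \rbag RO^{\sim 0}_{\{a\}\leftarrow D})^d$. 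This bookkeeping is the only place where care is needed, since the case labels change: the condition ``$x=a$, $y\ne a$'' becomes ``$x\neq a$, $y=a$'' after swapping arguments.

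\textbf{Step 3: transfer the properties.} A dual of a dimonoid is a dimonoid. By Proposition~\ref{duality}, the halo is $\{a\}$ and the automorphism group is the same, namely $S_{D^{\sim 0}\setminus\{a,c,0\}}$. Noncommutativity is preserved, again by Proposition~\ref{duality}. Nonabelianness transfers because abelian means self-dual, and $X^d=X$ holds if and only if $(X^d)^d=X^d$. Alternatively, one can observe directly that $(D^{\sim 0},\dashv)$ and $(D^{\sim 0},\vdash)$ are not dual to each other.
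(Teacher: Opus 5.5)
Your proposal is correct and matches the paper's approach: the paper obtains this corollary directly from Theorem~\ref{LOB_RO_a_D} and Proposition~\ref{duality}, reading the structure as the dual of $LOB_{D^{\sim 0}}^{a,c} \rbag RO^{\sim 0}_{\{a\}\leftarrow D}$. Your explicit comparison of the dual operation tables, and your self-duality argument for nonabelianness, fill in bookkeeping the paper leaves implicit.
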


\begin{proposition}\label{iso_LOB_RO} Let $D$ be a set of cardinality $|D|>1$,  $0\notin D$,  $D^{\sim 0}=D\cup\{0\}$, and $a\in D$, $c\in D^{\sim 0}$ be different elements. If $\{0,b\}\subset D^{\sim 0}\setminus\{a,c\}$, then the dimonoids  $LOB_{D^{\sim 0}}^{a,c} \rbag RO^{\sim b}_{\{a\}\leftarrow D^{\sim 0}\setminus\{b\}}$ and $LOB_{D^{\sim 0}}^{a,c} \rbag RO^{\sim 0}_{\{a\}\leftarrow D^{\sim 0}\setminus\{0\}}$ are isomorphic while the dimonoids  $LOB_{D^{\sim 0}}^{a,c} \rbag RO^{\sim b}_{\{a\}\leftarrow D^{\sim 0}\setminus\{b\}}$ and $LOB_{D^{\sim 0}}^{a,c} \rbag RO^{\sim c}_{\{a\}\leftarrow D^{\sim 0}\setminus\{c\}}$ are not isomorphic, where the semigroups $LOB_{D^{\sim 0}}^{a,c}$ and $RO^{\sim p}_{\{a\}\leftarrow D^{\sim 0}\setminus\{p\}}$ for $p\in\{0,b,c\}$ are endowed with the operations

\begin{center}
$x\dashv y=\begin{cases}
a,\text{ if } x=y=a \\
c,\text{ if } x=a\text{ and } y\neq a\\
x,\text{ if } x\in D^{\sim 0}\setminus \{a\}
\end{cases}$
and\ \ \ \  
$x\vdash_p y=\begin{cases}
y,\text{ if } x=a \\
p,\text{ if } x\in D^{\sim 0}\setminus \{a\}.
\end{cases}$
\end{center}
\end{proposition}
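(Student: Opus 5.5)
The plan is to realize each of the three structures as an instance of Theorem~\ref{LOB_RO_a_D} with the zero of $\vdash_p$ placed at $p$, and then use the automorphism/invariant information from that theorem's proof.

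\textbf{Isomorphism for $p\in\{0,b\}$.} Since both $0$ and $b$ lie in $D^{\sim 0}\setminus\{a,c\}$, take the transposition $\psi$ of $D^{\sim 0}$ swapping $0$ and $b$ and fixing every other point, in particular $a$ and $c$. I will check that $\psi$ is an isomorphism $(D^{\sim 0},\dashv,\vdash_b)\to(D^{\sim 0},\dashv,\vdash_0)$.

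For $\dashv$: any bijection fixing $a$ and $c$ is an automorphism of $LOB_{D^{\sim 0}}^{a,c}$, exactly as in the second half of the proof of Theorem~\ref{LOB_RO_a_D}. Indeed, $\psi(x)\neq a$ iff $x\neq a$, so left zeros go to left zeros, $a\dashv a=a$ is preserved, and $a\dashv y=c$ for $y\ne a$ is preserved.

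For $\vdash$: if $x=a$, then $\psi(a\vdash_b y)=\psi(y)=a\vdash_0\psi(y)$. If $x\ne a$, then $\psi(x\vdash_b y)=\psi(b)=0=\psi(x)\vdash_0\psi(y)$, because $\psi(x)\neq a$. This argument does not even need $|D|>1$ beyond the existence of $b$.

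\textbf{Non-isomorphism for $p=b$ versus $p=c$.} I will find an invariant that distinguishes the two structures. The main step is to identify canonically defined elements.

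In both dimonoids, $a$ is the unique right identity of $(D^{\sim 0},\dashv)$: every $x\ne a$ is a left zero, and $a\dashv y=c\neq a$ for $y\neq a$. So any isomorphism $\varphi$ fixes $a$. The element $c$ is then characterized as the unique value $a\dashv y$ with $y\neq a$, so $\varphi(c)=c$, using $\varphi(y)\neq a$ as in the theorem's proof.

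The zero $p$ of $(D^{\sim 0},\vdash_p)$ is also preserved, since zeros are preserved by isomorphisms. Here $p$ is indeed a zero: $a\vdash_p p=p$ and $x\vdash_p y=p$ for $x\ne a$. So $\varphi(b)=c$. Combined with $\varphi(c)=c$ and $b\ne c$, this contradicts injectivity.

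Equivalently, the invariant is whether the zero of $(D^{\sim 0},\vdash)$ coincides with the element $a\dashv y$ ($y\ne a$). It does for $p=c$ and does not for $p=b$.

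\textbf{Expected obstacle.} The only delicate point is confirming that $(D^{\sim 0},\dashv,\vdash_c)$ is actually a dimonoid, so that the comparison makes sense. I would re-run the case check of Theorem~\ref{LOB_RO_a_D} with $0$ replaced by $c$; the verifications there never used $0\ne c$, only that $x\vdash y$ is constant for $x\neq a$ and that this constant is not $a$.
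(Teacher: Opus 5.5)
Your proposal is correct and follows the paper's own proof. The paper uses the same transposition of $0$ and $b$ for the isomorphism, and gets non-isomorphism the same way you do: an isomorphism must fix $c$ (as in the proof of Theorem~\ref{LOB_RO_a_D}) yet send the zero of one $\vdash$-semigroup to the zero of the other. Your separate check that $(D^{\sim 0},\dashv,\vdash_c)$ is a dimonoid is harmless but unnecessary: it is a relabeled instance of Theorem~\ref{LOB_RO_a_D}, whose hypotheses allow $c$ to be the adjoined zero.
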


\begin{proof}One may verify that the dimonoids  $LOB_{D^{\sim 0}}^{a,c} \rbag RO^{\sim b}_{\{a\}\leftarrow D^{\sim 0}\setminus\{b\}}$ and $LOB_{D^{\sim 0}}^{a,c} \rbag RO^{\sim 0}_{\{a\}\leftarrow D^{\sim 0}\setminus\{0\}}$ are isomorphic via the bijection $\psi: D^{\sim 0} \to D^{\sim 0}$ defined by $\psi(0) = b$, $\psi(b) = 0$, and $\psi(x) = x$ for all $x\notin\{0,b\}$. 

To prove that the dimonoids  $LOB_{D^{\sim 0}}^{a,c} \rbag RO^{\sim b}_{\{a\}\leftarrow D^{\sim 0}\setminus\{b\}}$ and $LOB_{D^{\sim 0}}^{a,c} \rbag RO^{\sim c}_{\{a\}\leftarrow D^{\sim 0}\setminus\{c\}}$ are not isomorphic, suppose, to the contrary, that $\phi:LOB_{D^{\sim 0}}^{a,c} \rbag RO^{\sim c}_{\{a\}\leftarrow D^{\sim 0}\setminus\{c\}} \to LOB_{D^{\sim 0}}^{a,c} \rbag RO^{\sim b}_{\{a\}\leftarrow D^{\sim 0}\setminus\{b\}}$ is an isomorphism. Then $\phi$ is an automorphism of the semigroup $LOB_{D^{\sim 0}}^{a,c}$ and $\phi$ is an isomorphism from the semigroup $RO^{\sim c}_{\{a\}\leftarrow D^{\sim 0}\setminus\{c\}}$ to the semigroup $RO^{\sim b}_{\{a\}\leftarrow D^{\sim 0}\setminus\{b\}}$. Since isomorphisms preserve zeros, and $c$ and $b$ are zeros of $RO^{\sim c}_{\{a\}\leftarrow D^{\sim 0}\setminus\{c\}}$ and $RO^{\sim b}_{\{a\}\leftarrow D^{\sim 0}\setminus\{b\}}$, respectively, we obtain $\phi(c) = b$. On the other hand, $\phi(c) = c$ by the proof of Theorem~\ref{LOB_RO_a_D}, which yields a contradiction. 
\end{proof}

From Corollary~\ref{iso_dm_dual} and Proposition~\ref{iso_LOB_RO} we obtain the following proposition.

\begin{proposition} Let $D$ be a set of cardinality $|D|>1$,  $0\notin D$,  $D^{\sim 0}=D\cup\{0\}$, and $a\in D$, $c\in D^{\sim 0}$ be different elements. If $\{0,b\}\subset D^{\sim 0}\setminus\{a,c\}$, then the dimonoids  $LO^{\sim b}_{\{a\}\leftarrow D^{\sim 0}\setminus\{b\}} \rbag ROB_{D^{\sim 0}}^{a,c}$ and $LO^{\sim 0}_{\{a\}\leftarrow D^{\sim 0}\setminus\{0\}} \rbag ROB_{D^{\sim 0}}^{a,c}$ are isomorphic while the dimonoids  $LO^{\sim b}_{\{a\}\leftarrow D^{\sim 0}\setminus\{b\}} \rbag ROB_{D^{\sim 0}}^{a,c} =(D^{\sim 0}, \dashv, \vdash)$ and $LO^{\sim c}_{\{a\}\leftarrow D^{\sim 0}\setminus\{c\}} \rbag ROB_{D^{\sim 0}}^{a,c}$ are not isomorphic, where the semigroups  $LO^{\sim p}_{\{a\}\leftarrow D^{\sim 0}\setminus\{p\}}$ for $p\in\{0,b,c\}$ and $ROB_{D^{\sim 0}}^{a,c}$ are endowed with the operations

\begin{center}
$x\dashv_p y=\begin{cases}
x,\text{ if } y=a \\
p,\text{ if } y\in D^{\sim 0}\setminus \{a\}
\end{cases}$
and\ \ \ \ 
$x\vdash y=\begin{cases}
a,\text{ if } x=y=a \\
c,\text{ if } x\neq a\text{ and }y=a \\
y,\text{ if } y\in D^{\sim 0}\setminus \{a\}.
\end{cases}$
\end{center}
\end{proposition}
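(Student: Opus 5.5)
The plan is to obtain the statement from Proposition~\ref{iso_LOB_RO} by passing to duals, in the same way Corollary~\ref{LO_a_D_ROB} is obtained from Theorem~\ref{LOB_RO_a_D}. The first step is to identify the dual of $LOB_{D^{\sim 0}}^{a,c} \rbag RO^{\sim p}_{\{a\}\leftarrow D^{\sim 0}\setminus\{p\}}$ for each $p\in\{0,b,c\}$. By definition of the dual dimonoid, $x\dashv^d y = y\vdash_p x$. This equals $x$ when $y=a$ and equals $p$ otherwise, so it is exactly the operation $\dashv_p$ of $LO^{\sim p}_{\{a\}\leftarrow D^{\sim 0}\setminus\{p\}}$. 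Likewise, $x\vdash^d y = y\dashv x$. This equals $a$ if $x=y=a$, equals $c$ if $y=a$ and $x\neq a$, and equals $y$ if $y\neq a$, which is precisely the operation $\vdash$ of $ROB_{D^{\sim 0}}^{a,c}$. Therefore
$$\bigl(LOB_{D^{\sim 0}}^{a,c} \rbag RO^{\sim p}_{\{a\}\leftarrow D^{\sim 0}\setminus\{p\}}\bigr)^d = LO^{\sim p}_{\{a\}\leftarrow D^{\sim 0}\setminus\{p\}} \rbag ROB_{D^{\sim 0}}^{a,c}.$$
Since the dual of a dimonoid is a dimonoid, both structures appearing in the statement are dimonoids.

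The second step is to apply Corollary~\ref{iso_dm_dual}, which says that two dimonoids are isomorphic if and only if their duals are isomorphic. Proposition~\ref{iso_LOB_RO} states that the structures with $p=b$ and $p=0$ are isomorphic and that those with $p=b$ and $p=c$ are not. Taking duals transfers both facts directly to the $LO\rbag ROB$ versions. In fact Proposition~\ref{homo_dm_dual} shows the same map works: the transposition $\psi$ swapping $0$ and $b$ is again an isomorphism.

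The only real work is the bookkeeping in the first step, namely checking that the case distinctions match exactly after swapping arguments. I do not expect a genuine obstacle. As an optional sanity check on the non-isomorphism, one can argue directly. An isomorphism must fix $a$, because $a$ is the unique left identity of $(D^{\sim 0},\dashv_p)$. It must also fix $c$, because $c=x\vdash a$ for any $x\neq a$. Finally, it must send the zero $c$ of the $\dashv_c$ semigroup to the zero $b$ of the $\dashv_b$ semigroup, which contradicts fixing $c$.
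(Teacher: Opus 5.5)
Your proof is correct and takes the same route as the paper, which obtains this proposition directly from Proposition~\ref{iso_LOB_RO} via Corollary~\ref{iso_dm_dual}. Your explicit check that the dual of $LOB_{D^{\sim 0}}^{a,c}\rbag RO^{\sim p}_{\{a\}\leftarrow D^{\sim 0}\setminus\{p\}}$ is exactly $LO^{\sim p}_{\{a\}\leftarrow D^{\sim 0}\setminus\{p\}}\rbag ROB_{D^{\sim 0}}^{a,c}$ is the bookkeeping the paper leaves implicit.

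One small slip appears in your optional direct check. The element $a$ is the unique \emph{right} identity of $(D^{\sim 0},\dashv_p)$, and also the unique left identity of $(D^{\sim 0},\vdash)$; it is not a left identity of $\dashv_p$, since $a\dashv_p y=p$ for $y\neq a$. The conclusion $\phi(a)=a$ is unaffected.
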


\bigskip

The following proposition was proved in~\cite{Gdim1}.

\begin{proposition}\label{lodim}
Let $(D, \dashv)$ be a semigroup  and $(D,\vdash)$ be a null semigroup with zero $0$. An algebraic structure $(D,\dashv, \vdash)$ is a dimonoid if and only if $0$ is a left zero of $(D, \dashv)$ and $x\dashv y\dashv z = x\dashv 0$ for all $x,y,z\in D$.
\end{proposition}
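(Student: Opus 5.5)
We must prove Proposition~\ref{lodim}: with $(D,\vdash)$ null with zero $0$, so that $x\vdash y=0$ for all $x,y$, the structure $(D,\dashv,\vdash)$ is a dimonoid if and only if $0$ is a left zero of $(D,\dashv)$ and $x\dashv y\dashv z=x\dashv 0$ for all $x,y,z\in D$. Both operations are associative by hypothesis, so the plan is to substitute $\vdash\equiv 0$ into the three axioms $(D_1)$, $(D_2)$, $(D_3)$ and read off the conditions each one imposes.

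\emph{Axiom $(D_3)$.} Both sides of $(x\dashv y)\vdash z = x\vdash(y\vdash z)$ are $\vdash$-products, so both equal $0$. Hence $(D_3)$ holds automatically.

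\emph{Axiom $(D_2)$.} It reads $(x\vdash y)\dashv z = x\vdash(y\dashv z)$. The left side is $0\dashv z$ and the right side is $0$. So $(D_2)$ is equivalent to $0\dashv z=0$ for all $z$, i.e.\ to $0$ being a left zero of $(D,\dashv)$.

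\emph{Axiom $(D_1)$.} It reads $(x\dashv y)\dashv z = x\dashv(y\vdash z)=x\dashv 0$. By associativity of $\dashv$, this is exactly the condition $x\dashv y\dashv z=x\dashv 0$ for all $x,y,z$.

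Combining the three cases, the dimonoid axioms are equivalent to the two stated conditions, which gives both directions at once. No step is a real obstacle. The only points needing care are that associativity of $\dashv$ lets us write the triple product without brackets, and that in $(D_2)$ the right-hand side collapses to $0$ regardless of $y\dashv z$.
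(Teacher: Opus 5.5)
Your proof is correct: with $x\vdash y=0$ for all $x,y$, axiom $(D_3)$ holds automatically, $(D_2)$ is equivalent to $0\dashv z=0$ for all $z$ (that is, $0$ is a left zero of $(D,\dashv)$), and $(D_1)$ is exactly $x\dashv y\dashv z=x\dashv 0$. The paper gives no proof of this proposition (it is quoted from \cite{Gdim1}), and your axiom-by-axiom check is the natural complete argument.
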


\begin{theorem}\label{LOA_O} Let $A$ be a proper subset of a set $D$ such that $a\in A$ and $0\in A$. Then $LO_{A_a^0\leftarrow D}\rbag  O_{D^0} = (D,\dashv, \vdash)$, where $(D,\vdash)$ is a null semigroup with zero $0$ and
\begin{center}
$x\dashv y=\begin{cases}
x,\text{ if } x\in A \\
a,\text{ if } x \in D\setminus A,
\end{cases}$
\end{center}
is a rectangular dimonoid with empty halo and $\Aut(LO_{A_a^0\leftarrow D}\rbag  O_{D^0}) \cong S_{A\setminus\{0, a\}}\times S_{D\setminus A}$. Moreover, if $|A|>1$, then  $LO_{A_a^0\leftarrow D}\rbag  O_{D^0}$ is a nonabelian noncommutative dimonoid. 
\end{theorem}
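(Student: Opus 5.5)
The plan is to derive the dimonoid property from Proposition~\ref{lodim}, since $(D,\vdash)$ is the null semigroup with zero $0$. Two conditions must be checked.

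\begin{itemize}
\item[1)] $0$ is a left zero of $(D,\dashv)$. This is immediate: $0\in A$, so $0\dashv y=0$ for all $y$.
\item[2)] $x\dashv y\dashv z=x\dashv 0$ for all $x,y,z\in D$. Both sides depend only on $x$. If $x\in A$, each side equals $x$. If $x\notin A$, then $x\dashv y=a\in A$, so $x\dashv y\dashv z=a=x\dashv 0$.
\end{itemize}

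Associativity of $\dashv$ is part of the definition of $LO_{A_a\leftarrow D}$ given in the preliminaries, so no separate check is needed. Rectangularity follows because $LO_{A_a\leftarrow D}$ and null semigroups were already noted to be rectangular.

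\textbf{Halo and the nonabelian, noncommutative claims.} The halo is empty because $(D,\vdash)$ has no left identity when $|D|>1$. Indeed, $A$ is proper and contains $0$, so $|D|\ge 2$, and $e\vdash d=0\ne d$ for any $d\ne 0$.

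Now assume $|A|>1$. Pick $x\in A\setminus\{0\}$ and $y\in D\setminus A$.
\begin{itemize}
\item[(i)] \emph{Noncommutative:} $x\dashv y=x$ while $y\dashv x=a$. If $a\ne x$ this settles it. If $a=x$, use $x\dashv 0=x\ne 0=0\dashv x$ instead.
\item[(ii)] \emph{Nonabelian:} it suffices that $(D,\dashv)$ is not a null semigroup dual to $(D,\vdash)$, and indeed $x\dashv y=x\ne 0=y\vdash x$.
\end{itemize}

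\textbf{Automorphisms.} Let $\psi\in\Aut$. Then $\psi(0)=0$, since $0$ is the zero of $(D,\vdash)$.

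The set $A$ is exactly the set of left zeros of $(D,\dashv)$: elements outside $A$ give $z\dashv z=a\ne z$. Automorphisms preserve the set of left zeros, so $\psi(A)=A$ and $\psi(D\setminus A)=D\setminus A$.

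Since $D\setminus A\ne\emptyset$, pick $y\notin A$. Then $\psi(a)=\psi(y\dashv y)=\psi(y)\dashv\psi(y)=a$. Hence $\psi$ fixes $0$ and $a$ and permutes $A\setminus\{0,a\}$ and $D\setminus A$.

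Conversely, let $f$ be any bijection fixing $0$ and $a$ and preserving $A$ and $D\setminus A$. Then:
\begin{itemize}
\item[(i)] $f$ respects $\vdash$, since it fixes $0$.
\item[(ii)] For $x\in A$: $f(x\dashv y)=f(x)=f(x)\dashv f(y)$, because $f(x)\in A$.
\item[(iii)] For $x\notin A$: $f(x\dashv y)=f(a)=a=f(x)\dashv f(y)$, because $f(x)\notin A$.
\end{itemize}
This gives $\Aut\cong S_{A\setminus\{0,a\}}\times S_{D\setminus A}$.

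The case $a=0$ is allowed in the statement and is harmless: the argument is unchanged, with $A\setminus\{0,a\}=A\setminus\{0\}$. The only mildly delicate step is the noncommutativity witness in the case $a=x$, handled in (i) above. Everything else is routine.
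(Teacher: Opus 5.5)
Your proof is correct and follows essentially the same route as the paper: Proposition~\ref{lodim} for the dimonoid axioms, the null semigroup $(D,\vdash)$ for the empty halo and for $\psi(0)=0$, preservation of the left zeros $A$ plus a product with an element outside $A$ to force $\psi(a)=a$ (you use $y\dashv y$, the paper uses $d\dashv a$), and a direct check that the converse bijections are automorphisms. Your explicit observation that $A$ is \emph{exactly} the set of left zeros of $(D,\dashv)$, since $z\dashv z=a\neq z$ for $z\notin A$, supplies the justification for $\psi(A)=A$ that the paper leaves implicit.
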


\begin{proof}
Using the definition of the semigroup $LO_{A_a^0\leftarrow D}$, it is immediate to check that $x\dashv y\dashv z = x\dashv 0$ for all $x,y,z\in D$. Since $0$ is a left zero of $LO_{A_a^0\leftarrow D}$, Proposition~\ref{lodim} implies that $LO_{A_a^0\leftarrow D}\rbag  O_{D^0}$ is a dimonoid. Taking into account that $O_{D^0}$ and $LO_{A_a^0\leftarrow D}$ are rectangular semigroups, we conclude that the dimonoid $LO_{A_a^0\leftarrow D}\rbag O_{D^0}$ is rectangular as well. 

Since in the case $|D|> 1$ the commutative semigroup $O_{D^0}$  contains no identity, $\Halo(LO_{A_a^0\leftarrow D}\rbag O_{D^0}) = \emptyset$.

Let $\psi$ be an arbitrary automorphism of the dimonoid $(D,\dashv, \vdash)$. Then $\psi$ is an automorphism of the semigroup $(D, \dashv)$ and $\psi$ is an automorphism of the semigroup $(D, \vdash)$. Since automorphisms preserve zeros and $0$ is a zero of $(D, \vdash)$, we conclude that $\psi(0) = 0$.
Taking into account that automorphisms preserve left zeros and $A$ is a left zero subsemigroup of $(D, \dashv)$, we conclude that $\psi(A)=A$, and thus $\psi(D\setminus A)=D\setminus A$. Fix any $d\in D\setminus A$. It follows that $\psi(d)\in D\setminus A$, and hence $\psi(a)=\psi(d\dashv a) = \psi(d)\dashv \psi(a)=a$.

On the other hand, let $f$ be any bijection of $D$ such that $f(A)=A$, $f(a)=a$, and $f(0)=0$.  If $x\in A$, then $f(x)\in A$, and hence $f(x\dashv y)= f(x)= f(x)\dashv f(y)$ for all $y\in D$. In the case $x\in D\setminus A$ we have that $f(x)\in D\setminus A$, and thus $f(x\dashv y)= f(a)= a = f(x)\dashv f(y)$ for all $y\in D$. Moreover, $f(x\vdash y)= f(0)= 0 = f(x)\vdash f(y)$ for all $x, y\in D$. It follows that any bijection of $A$ that preserves $a$ and $0$ and any bijection of $D\setminus A$ generate an automorphism of $LO_{A_a^0\leftarrow D}\rbag  O_{D^0}$. Therefore, $\Aut(LO_{A_a^0\leftarrow D}\rbag  O_{D^0}) \cong S_{A\setminus\{0, a\}}\times S_{D\setminus A}$.

Since in the case $|A|>1$ the semigroup $LO_{A_a^0\leftarrow D}$ contains at least two left zeros, and hence it is not commutative, we conclude that the dimonoid $LO_{A_a^0\leftarrow D} \rbag  O_{D^0}$ is not commutative as well.   Taking into account that in the case $|A|>1$ the semigroup $LO_{A_a^0\leftarrow D}$ is not commutative while the semigroup $O_{D^0}$ is commutative, we conclude that the semigroups   $LO_{A_a^0\leftarrow D}$ and $O_{D^0}$ cannot be dual, and thus  the dimonoid $LO_{A_a^0\leftarrow D}\rbag  O_{D^0}$ is not abelian.
\end{proof}

Theorem~\ref{LOA_O} and Proposition~\ref{duality} imply the following corollary.

\begin{corollary} Let $A$ be a proper subset of a set $D$ such that $a\in A$ and $0\in A$. Then $O_{D^0} \rbag RO_{A_a^0\leftarrow D}  = (D,\dashv, \vdash)$, where $(D,\dashv)$ is a null semigroup with zero $0$ and
\begin{center}
$x\vdash y=\begin{cases}
y,\text{ if } y\in A \\
a,\text{ if } y \in D\setminus A,
\end{cases}$
\end{center}
is a rectangular dimonoid with empty halo and $\Aut(O_{D^0} \rbag RO_{A_a^0\leftarrow D}) \cong S_{A\setminus\{0, a\}}\times S_{D\setminus A}$. Moreover, if $|A|>1$, then  $O_{D^0} \rbag RO_{A_a^0\leftarrow D}$ is a nonabelian noncommutative dimonoid. 
\end{corollary}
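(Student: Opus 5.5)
The plan is to derive this corollary from Theorem~\ref{LOA_O} by duality, in the same way the paper derives Corollary~\ref{LO_a_D_ROB} from Theorem~\ref{LOB_RO_a_D}. First I will identify the dual of $LO_{A_a^0\leftarrow D}\rbag O_{D^0}=(D,\dashv,\vdash)$. The dual operations are $x\dashv^d y=y\vdash x=0$ and $x\vdash^d y=y\dashv x$. The second equals $y$ if $y\in A$ and equals $a$ if $y\in D\setminus A$. So the dual dimonoid is exactly $O_{D^0}\rbag RO_{A_a^0\leftarrow D}$ with the operations displayed in the statement. Since $(D,\dashv,\vdash)^d$ is a dimonoid whenever $(D,\dashv,\vdash)$ is, this already shows that $O_{D^0}\rbag RO_{A_a^0\leftarrow D}$ is a dimonoid.

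Next I will transfer the remaining properties. Rectangularity survives dualization: if $x*y*z=x*z$ holds, then the same identity holds for $*^d$, because $z*y*x=z*x$. Also $O_{D^0}$ is self-dual and $RO_{A_a^0\leftarrow D}$ is the dual of the rectangular semigroup $LO_{A_a^0\leftarrow D}$, so both component semigroups are rectangular. Proposition~\ref{duality} gives equality of halos and automorphism groups. The halo is therefore empty and $\Aut\cong S_{A\setminus\{0,a\}}\times S_{D\setminus A}$, as in Theorem~\ref{LOA_O}.

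Finally I will handle the case $|A|>1$. Proposition~\ref{duality} shows that commutativity is preserved under duality, so the dual is noncommutative. For nonabelianness I will use the fact that abelian means self-dual: if the dual were abelian, it would equal its own dual, which is the original dimonoid, and the original would then be abelian, a contradiction. Alternatively, as in the proof of Theorem~\ref{LOA_O}, the semigroup $RO_{A_a^0\leftarrow D}$ is noncommutative while $O_{D^0}$ is commutative, so they cannot be dual to each other.

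The only step needing care is the first one: correctly computing the dual operations and checking that they match the displayed $\vdash$ exactly, including the case $y\in D\setminus A$ giving $a$. Everything after that is formal.
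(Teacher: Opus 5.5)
Your proposal is correct and matches the paper's approach: the paper derives this corollary from Theorem~\ref{LOA_O} and Proposition~\ref{duality}. Your explicit check that the dual operations are $x\dashv^d y=0$ and $x\vdash^d y=y\dashv x$, and your transfer of rectangularity, halo, automorphism group, noncommutativity and nonabelianness, fill in the details the paper leaves implicit.
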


\begin{proposition}\label{LO_O_iso} Let $A$ be a proper subset of a set $D$ such that $0\in A$. If $\{a,b\}\subset A\setminus\{0\}$, then the dimonoids  $LO_{A_a^0\leftarrow D}\rbag O_{D^0}$ and $LO_{A_b^0\leftarrow D}\rbag O_{D^0}$ are isomorphic while the dimonoids  $LO_{A_a^0\leftarrow D}\rbag O_{D^0}$ and $LO_{A_0\leftarrow D}\rbag O_{D^0}$ are not isomorphic, where $O_{D^0}$ is a null semigroup with zero $0$ and for $p\in\{a,b,0\}$ the semigroup $LO_{A_p^0\leftarrow D}$ is endowed with the operation
\begin{center}
$x\dashv_p y=\begin{cases}
x,\text{ if } x\in A \\
p,\text{ if } x \in D\setminus A.
\end{cases}$
\end{center}
\end{proposition}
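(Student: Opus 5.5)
For the isomorphism part, I will use the transposition $\psi:D\to D$ with $\psi(a)=b$, $\psi(b)=a$ and $\psi(x)=x$ for $x\notin\{a,b\}$. Since $a,b\in A\setminus\{0\}$, the map $\psi$ fixes $0$ and satisfies $\psi(A)=A$ and $\psi(D\setminus A)=D\setminus A$.

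First, $\psi$ respects the null semigroup $O_{D^0}$: $\psi(x\vdash y)=\psi(0)=0=\psi(x)\vdash\psi(y)$. Next, I check $\psi(x\dashv_a y)=\psi(x)\dashv_b\psi(y)$ in two cases, exactly as in the automorphism part of the proof of Theorem~\ref{LOA_O}.
\begin{itemize}
\item If $x\in A$, then $\psi(x)\in A$, so both sides equal $\psi(x)$.
\item If $x\in D\setminus A$, then $\psi(x)\in D\setminus A$, so the left side is $\psi(a)=b$ and the right side is $b$.
\end{itemize}
Hence $\psi$ is an isomorphism. It is worth recording that both structures are dimonoids by Theorem~\ref{LOA_O}, and so is $LO_{A_0\leftarrow D}\rbag O_{D^0}$. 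For the last one, Proposition~\ref{lodim} applies: $0$ is a left zero, and $x\dashv y\dashv z$ equals $x$ if $x\in A$ and $0$ otherwise, which is $x\dashv 0$.

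For non-isomorphism, I will suppose $\phi:LO_{A_a^0\leftarrow D}\rbag O_{D^0}\to LO_{A_0\leftarrow D}\rbag O_{D^0}$ is an isomorphism and derive a contradiction. Since $\phi$ is an isomorphism of the null semigroups, it maps zero to zero, so $\phi(0)=0$.

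In both $\dashv$-semigroups the set of left zeros is exactly $A$. Indeed, an element $x\notin A$ satisfies $x\dashv y=p\neq x$, since $p\in A$. Therefore $\phi(A)=A$ and $\phi(D\setminus A)=D\setminus A$.

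Now fix $d\in D\setminus A$, which exists because $A$ is proper. Then
\[
\phi(a)=\phi(d\dashv_a d)=\phi(d)\dashv_0\phi(d)=0 .
\]
This gives $\phi(a)=0=\phi(0)$ with $a\neq 0$, contradicting injectivity.

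An alternative invariant argument is also available. In the first dimonoid the unique product of $\dashv$ landing outside $\{x\}$ for $x\notin A$ is the element $a$, which differs from the $\vdash$-zero. In the second dimonoid this element coincides with the $\vdash$-zero.

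The only delicate step is justifying that $A$ is precisely the set of left zeros in both semigroups, so that $\phi$ preserves $D\setminus A$. That is the one-line check above, and no real obstacle remains.
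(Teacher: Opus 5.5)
Your proposal is correct and takes essentially the same approach as the paper: the transposition of $a$ and $b$ gives the isomorphism, and non-isomorphism follows from $\phi(0)=0$, $\phi(A)=A$ and evaluating a product $d\dashv y$ with $d\in D\setminus A$. The paper runs the hypothetical isomorphism in the opposite direction and gets $\phi(0)=\phi(d\dashv_0 0)=a$, contradicting $\phi(0)=0$ directly, whereas you get $\phi(a)=0=\phi(0)$ and contradict injectivity; your check that $A$ is exactly the set of left zeros supplies a step the paper leaves implicit.
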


\begin{proof}One can verify that the dimonoids  $LO_{A_a^0\leftarrow D}\rbag O_{D^0}$ and $LO_{A_b^0\leftarrow D}\rbag O_{D^0}$ are isomorphic via the bijection $\psi: D \to D$ defined by $\psi(a) = b$, $\psi(b) = a$, and $\psi(x) = x$ for all $x\notin\{a,b\}$. 

To prove that the dimonoids  $LO_{A_a^0\leftarrow D}\rbag O_{D^0}$ and $LO_{A_0\leftarrow D}\rbag O_{D^0}$ are not isomorphic, suppose, to the contrary, that $\phi:LO_{A_0\leftarrow D}\rbag O_{D^0} \to LO_{A_a^0\leftarrow D}\rbag O_{D^0}$ is an isomorphism. Then $\phi$ is an isomorphism from the semigroup $LO_{A_0\leftarrow D}$ to the semigroup $LO_{A_a^0\leftarrow D}$ and $\phi$ is an automorphism of the semigroup $O_{D^0}$. Taking into account that isomorphisms preserve left zeros and $A$ is a left zero subsemigroup of the semigroups $LO_{A_0\leftarrow D}$ and $LO_{A_a^0\leftarrow D}$, we conclude that $\phi(A)=A$, and thus $\phi(D\setminus A)=D\setminus A$. Fix any $d\in D\setminus A$. It follows that $\phi(d)\in D\setminus A$, and hence $\phi(0)=\phi(d\dashv_0 0) = \phi(d)\dashv_a \phi(0)= a$. Since automorphisms preserve zeros and $0$ is a zero of $O_{D^0}$, we obtain $\phi(0) = 0$, which yields a contradiction. 
\end{proof}

From Proposition~\ref{LO_O_iso} and Corollary~\ref{iso_dm_dual} we obtain the following proposition.

\begin{proposition} Let $A$ be a proper subset of a set $D$ such that $0\in A$. If $\{a,b\}\subset A\setminus\{0\}$, then the dimonoids  $O_{D^0} \rbag  RO_{A_a^0\leftarrow D}$ and $O_{D^0} \rbag  RO_{A_b^0\leftarrow D}$ are isomorphic while the dimonoids  $O_{D^0} \rbag  RO_{A_a^0\leftarrow D}$ and $O_{D^0} \rbag  RO_{A_0\leftarrow D}$ are not isomorphic, where $O_{D^0}$ is a null semigroup with zero $0$ and for $p\in\{a,b,0\}$ the semigroup $RO_{A_p^0\leftarrow D}$ is endowed with the operation
\begin{center}
$x\vdash_p y=\begin{cases}
y,\text{ if } y\in A \\
p,\text{ if } y \in D\setminus A.
\end{cases}$
\end{center}
\end{proposition}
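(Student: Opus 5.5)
The plan is to reduce the statement to Proposition~\ref{LO_O_iso} by duality, exactly as the paper did after Theorem~\ref{LOA_O}.

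First, I compute the dual of $LO_{A_p^0\leftarrow D}\rbag O_{D^0}=(D,\dashv_p,\vdash)$ for each $p\in\{a,b,0\}$. By definition, $x\dashv^d y = y\vdash x = 0$, so $(D,\dashv^d)$ is the null semigroup $O_{D^0}$. Also $x\vdash^d y = y\dashv_p x$. This equals $y$ when $y\in A$ and equals $p$ when $y\in D\setminus A$, which is precisely the operation $\vdash_p$ of $RO_{A_p^0\leftarrow D}$ in the statement. Hence
$$(LO_{A_p^0\leftarrow D}\rbag O_{D^0})^d = O_{D^0}\rbag RO_{A_p^0\leftarrow D}.$$
(For $p=0$ the notation $RO_{A_0\leftarrow D}$ is read as $RO_{A_0^0\leftarrow D}$, consistent with the statement.) Each of these structures is a dimonoid by Theorem~\ref{LOA_O} and its corollary, or simply because the dual of a dimonoid is a dimonoid.

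Second, I apply Corollary~\ref{iso_dm_dual}, which says that two dimonoids are isomorphic if and only if their duals are. Proposition~\ref{LO_O_iso} gives $LO_{A_a^0\leftarrow D}\rbag O_{D^0}\cong LO_{A_b^0\leftarrow D}\rbag O_{D^0}$, so the duals $O_{D^0}\rbag RO_{A_a^0\leftarrow D}$ and $O_{D^0}\rbag RO_{A_b^0\leftarrow D}$ are isomorphic. The same map works directly: the transposition of $a$ and $b$ is an isomorphism, by Proposition~\ref{homo_dm_dual}. Likewise, Proposition~\ref{LO_O_iso} gives $LO_{A_a^0\leftarrow D}\rbag O_{D^0}\not\cong LO_{A_0\leftarrow D}\rbag O_{D^0}$, and Corollary~\ref{iso_dm_dual} then yields that $O_{D^0}\rbag RO_{A_a^0\leftarrow D}$ and $O_{D^0}\rbag RO_{A_0\leftarrow D}$ are not isomorphic.

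The only step needing care is the first one: checking that the dual operations match the stated $RO$ operations exactly, including the roles of $p$ and of the zero $0$. Once that identification is made, the rest is a formal consequence of the earlier results.
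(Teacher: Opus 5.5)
Your proposal is correct and follows the paper's route: the paper obtains this proposition directly from Proposition~\ref{LO_O_iso} and Corollary~\ref{iso_dm_dual}, exactly as you do. Your explicit check that $(LO_{A_p^0\leftarrow D}\rbag O_{D^0})^d = O_{D^0}\rbag RO_{A_p^0\leftarrow D}$ fills in the identification of the dual dimonoids that the paper leaves implicit.
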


\section{Three-element dimonoids}\label{sec:3el_dm}

In this section, we focus on describing, up to isomorphism, all dimonoids of order $3$.

Among the $19683$ possible binary operations on a three-element set $S$, precisely $113$ are associative. In other words, there exist exactly $113$ distinct three-element semigroups. However, many of these semigroups are isomorphic, and as a result, there are essentially only $24$ pairwise nonisomorphic semigroups of order $3$, see \cite{Ch, G8, G9}.

Among these $24$ pairwise nonisomorphic semigroups of order $3$, there are $12$ commutative semigroups. The remaining $12$ pairwise nonisomorphic noncommutative semigroups are partitioned into pairs of dual semigroups. Moreover, the automorphism groups of dual semigroups coincide.

Lists of all pairwise nonisomorphic semigroups of order $3$ and their automorphism groups are presented in Table~\ref{tab:auts3} and Table~\ref{tab:autns3} taken from~\cite{G9}. In these tables, the notation $S^{+1}$ denotes a monoid obtained from $S$ by adjoining the extra identity $1$ (whether or not $S$ is a monoid), and $S^{+0}$ denotes a semigroup obtained from $S$ by adjoining the extra zero $0$ (regardless of whether $S$ has a zero). The notation $M^{\tilde{1}}$ stands for the semigroup obtained from a monoid $M$ with  identity $1$ by adjoining an extra element $\tilde{1}$ by putting $\tilde{1}* m=m* \tilde{1}=m$ for all $m\in M$ and ${\tilde{1}}*{\tilde{1}}=1$. The notation $\M_{r,m}$ refers to a finite monogenic semigroup of index $r$ and period $m$, $L_n$ denotes the  linear semilattice $\{0, 1,\ldots, n\!-\!1\}$ of order $n$ endowed with the operation of minimum, and $S_n$ stands for the group of all permutations of $X$ with $|X|=n$. More details about these semigroups can be found in \cite{GR1}.

\begin{table}[H]
\centering
\resizebox{11cm}{!}{
\begin{tabular}{|c|c|c|c|c|c|c|c|c|c|c|c|c|}
        \hline
        $S$ & $C_3$ & $O_3$  & $\M_{2,2}$ & $C_2^{+1}$ &  $C_2^{\tilde{1}}$ & $\M_{3,1}$&  $O_2^{+1}$ & $O_2^{+0}$ & $L_3$ & $C_2^{+0}$    & $O_3^2$  & $O_3^1$ \\
        \hline
        $\Aut(S)$ &  $C_2$ & $C_2$ & $C_1$ & $C_1$ & $C_{1}$  & $C_{1}$ & $C_{1}$ & $C_{1}$ & $C_{1}$ & $C_{1}$ & $C_{2}$ & $C_{1}$  \\
        \hline
\end{tabular}
}
\smallskip
\caption{Nonisomorphic commutative $3$-element semigroups and their automorphism groups}\label{tab:auts3}
\end{table}

\begin{table}[H]
\centering
\resizebox{14cm}{!}{
\begin{tabular}{|c|c|c|c|c|c|c|}
            \hline
            $S$ & $LO_3$, $RO_3$  & $LO_2^{+0}$, $RO_2^{+0}$ &  $LO^{\sim 0}_{1\leftarrow2}$, $RO^{\sim 0}_{1\leftarrow2}$ & $LO_2^{+1}$, $RO_2^{+1}$ & $LOB_3$, $ROB_3$  & $LO_{2\leftarrow 3}$, $RO_{2\leftarrow 3}$ \\
            \hline
            $\Aut(S)$ & $S_3$ & $C_2$ & $C_1$ & $C_{2}$ & $C_{1}$ & $C_{2}$ \\
            \hline
\end{tabular}
}
\smallskip
\caption{Nonisomorphic noncommutative $3$-element semigroups and their automorphism groups}\label{tab:autns3}
\end{table}

The following theorem,  established in~\cite{Gdim2}, provides a complete classification of all pairwise nonisomorphic commutative dimonoids of order~$3$.

\begin{theorem}
Up to isomorphism, there exist $14$  three-element commutative dimonoids among which $12$ trivial dimonoids and a pair of  nonabelian  nontrivial dual dimonoids. 
\end{theorem}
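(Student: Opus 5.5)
The plan is to reduce the count to a finite check over pairs of commutative three-element semigroups, organized by the classification in Table~\ref{tab:auts3}. A commutative dimonoid is a pair $(D,\dashv,\vdash)$ in which both $(D,\dashv)$ and $(D,\vdash)$ are commutative semigroups. The trivial ones, with $\dashv=\vdash$, correspond bijectively up to isomorphism to the $12$ commutative semigroups of order $3$. This gives the $12$ trivial dimonoids, all of them abelian, since a commutative semigroup is dual to itself. It remains to show that, up to isomorphism, there are exactly two nontrivial commutative dimonoids of order $3$, and that they form a dual pair of nonabelian dimonoids. Nonabelianness is automatic from the remark that commutative nontrivial dimonoids are nonabelian. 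Proposition~\ref{duality} shows that the dual of a commutative dimonoid is again commutative, so the nontrivial ones come in dual pairs.

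For the nontrivial case, I would first use the axioms to derive strong restrictions. Under commutativity, $(D_1)$ reads $x\dashv(y\dashv z)=x\dashv(y\vdash z)$, and $(D_3)$ reads $(x\vdash y)\vdash z=(x\dashv y)\vdash z$. Using commutativity of $\vdash$, we get $z\vdash(x\vdash y)=z\vdash(x\dashv y)$. So the maps $u\mapsto x\dashv u$ cannot distinguish $y\dashv z$ from $y\vdash z$, and symmetrically for $\vdash$. Combined with $(D_2)$, this should force both operations to have a common ``rectangular-like'' shape: products of length three coincide across the two operations.

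I would then enumerate. For each ordered pair $(\mathbb S,\mathbb T)$ of commutative semigroups from Table~\ref{tab:auts3}, realized on $\{0,1,2\}$, let $\mathbb T$ range over all $3!/|\Aut|$ relabellings relative to a fixed $\mathbb S$. For each such pair, check $(D_1)$–$(D_3)$ and discard the trivial pairs. Groups, and more generally monoids such as $C_3$, $C_2^{+1}$ and $O_2^{+1}$, should be eliminated quickly. The reason is that an identity $e$ of $(D,\dashv)$ plugged into $(D_1)$ gives $y\dashv z=e\dashv(y\vdash z)$, hence $\dashv=\vdash$. The symmetric argument applies when $(D,\vdash)$ has an identity. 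This excludes every table entry with an identity from both coordinates. The remaining candidates are nilpotent-type or zero-containing semigroups: $O_3$, $O_2^{+0}$, $\M_{3,1}$, $O_3^1$, $O_3^2$, $C_2^{+0}$, $L_3$, $\M_{2,2}$ and $C_2^{\tilde 1}$. Idempotents give further eliminations, because $(D_1)$ with $y=z$ idempotent in $\vdash$ yields restrictions. I expect the surviving nontrivial pair to involve $O_3$ and $\M_{3,1}$ or $O_3^1$, in the two orders, which accounts for the dual pair.

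The main obstacle is ensuring that the case analysis is exhaustive without an enormous table. For that I would rely on the derived identity that all triple products agree, together with Corollary~\ref{char_iso_com_rec_dm} to handle the rectangular subcase: a commutative rectangular dimonoid is trivial. Finally, I would verify that the two surviving dimonoids are not isomorphic by comparing which of $\dashv$ and $\vdash$ is null. I would also confirm that they are duals of each other.
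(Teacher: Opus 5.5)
\textbf{Verdict: genuine gap.} Your reductions are correct, but the proposal never establishes which nontrivial pairs exist, and that is the whole content of the theorem.

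What is correct:
\begin{itemize}
\item The trivial dimonoids account for the $12$ commutative semigroups.
\item An identity in either operation forces $\dashv=\vdash$ via $(D_1)$ or $(D_3)$.
\item The identity $x\dashv y\dashv z=x\vdash y\vdash z$ does follow from $(D_2)$ once both sides are rewritten using $(D_1)$, $(D_3)$ and commutativity.
\item Dualizing a commutative dimonoid just swaps its two operations.
\end{itemize}
There is a minor slip: $L_3$ and $C_2^{+0}$ have identities, so your own argument removes them. The real candidates are $O_3$, $\M_{2,2}$, $C_2^{\tilde 1}$, $\M_{3,1}$, $O_2^{+0}$, $O_3^2$, $O_3^1$.

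The gap is that you never show exactly one nontrivial dimonoid survives up to duality. You announce an enumeration, do not carry it out, and state its outcome only as an expectation. That expectation is partly wrong: $O_3^1$ cannot occur, since by Proposition~\ref{lodim} $O_3^1\rbag O_3$ would need $a\dashv a\dashv a=a\dashv 0$ for the nonzero idempotent $a$.

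The tools you name for exhaustiveness do not do the job:
\begin{itemize}
\item Among commutative semigroups of order $3$ only $O_3$ is rectangular, so Corollary~\ref{char_iso_com_rec_dm} disposes only of $O_3\rbag O_3$.
\item The identity $x\dashv y\dashv z=x\vdash y\vdash z$ alone is too weak. Transport $C_2^{\tilde 1}$ onto $\M_{2,2}$ via $g\mapsto a^2$, $1\mapsto a^3$, $\tilde 1\mapsto a$. The two operations then have identical ternary products, yet $(D_1)$ fails at $x=y=z=a^2$: the left side is $a^2$, the right side is $a^2\dashv a^3=a^3$.
\end{itemize}

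The missing case analysis is short if you use what you already derived.
\begin{itemize}
\item \emph{Null case.} If $\dashv$ is null with zero $0$, then $(D_2)$ gives $x\vdash 0=0$ and $(D_3)$ gives $x\vdash y\vdash z=0$. So $(D,\vdash)$ is $O_3$ or $\M_{3,1}$ with zero $0$. This yields only the trivial $O_3$ and $O_3\rbag\M_{3,1}$; dually, $\vdash$ null yields $\M_{3,1}\rbag O_3$.
\item \emph{Neither operation null.} Your observation that $z\dashv(x\dashv y)=z\dashv(x\vdash y)$ and $z\vdash(x\dashv y)=z\vdash(x\vdash y)$ shows that any pair with $x\dashv y\neq x\vdash y$ is separated by no translation of either semigroup. $O_3^2$ has no such pair. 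Each of $\M_{2,2}$, $C_2^{\tilde 1}$, $\M_{3,1}$, $O_2^{+0}$, $O_3^1$ has exactly one, say $P$.
\item Hence, if $\dashv\neq\vdash$, the pair $P$ is common to both operations, and both induce the same operation on the classes $\{w\}$ and $P$. These induced two-element operations are pairwise distinct for the five types, so both operations have the same type. The identity $x\dashv y\dashv z=x\vdash y\vdash z$ then fixes the labelling inside $P$, forcing $\dashv=\vdash$.
\end{itemize}
Without an argument of this kind, or an explicitly completed enumeration, neither the existence of the pair $\M_{3,1}\rbag O_3$, $O_3\rbag\M_{3,1}$ nor the completeness of the list is established.
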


\bigskip

Table~\ref{tab:commdim3},  taken from~\cite{Gdim2}, lists all pairwise nonisomorphic commutative nontrivial three-element dimonoids and their corresponding automorphism groups.

\begin{table}[H]
\centering
\resizebox{4.5cm}{!}{
    \begin{tabular}{|c|c|c|}
        \hline
        $D$ &  $\M_{3,1} \rbag O_3$ & $O_3 \rbag \M_{3,1}$\\
        \hline
        $\Aut(D)$ & $C_{1}$ & $C_{1}$ \\
        \hline
    \end{tabular}
}
\smallskip
\caption{Nonisomorphic commutative nontrivial three-element dimonoids and their automorphism groups}\label{tab:commdim3}
\end{table}

The following theorem,  proved in~\cite{Gdim2}, provides a complete classification of all pairwise nonisomorphic abelian dimonoids of order~$3$.

\begin{theorem}
Up to isomorphism, there exist $17$  three-element abelian dimonoids among which $12$ commutative trivial dimonoids and $5$ noncommutative nontrivial dimonoids.
\end{theorem}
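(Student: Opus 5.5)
This theorem is quoted from~\cite{Gdim2}, but I would reprove it by exhaustive classification. The plan is to pass from commutative semigroups to pairs of dual semigroups, enumerate, and count isomorphism classes.

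By the characterization recalled in Section~2 (from~\cite{Gdim1}), a dimonoid $(D,\dashv,\vdash)$ is abelian if and only if $(D,\vdash)=(D,\dashv)^d$. So every abelian dimonoid of order $3$ has the form $(D,*)\rbag(D,*)^d$ for some three-element semigroup $(D,*)$. Conversely, I must decide for which semigroups $(D,*)$ the structure $(D,*,*^d)$ satisfies $(D_1)$--$(D_3)$.

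Writing $x\dashv y=x*y$ and $x\vdash y=y*x$, the three axioms become:
\begin{itemize}
\item $(D_1)$: $x*y*z=x*z*y$;
\item $(D_2)$: $y*x*z=z*y*x$;
\item $(D_3)$: $z*x*y=y*z*x$.
\end{itemize}
These conditions reduce to the single requirement that $x*y*z$ is invariant under all permutations of $x,y,z$, since the transposition in $(D_1)$ together with the cyclic shift in $(D_3)$ generates $S_3$.

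The case analysis then splits into two parts.

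\begin{itemize}
\item \textbf{Commutative $(D,*)$.} The condition is automatic, and the dimonoid is trivial. This gives exactly the $12$ trivial dimonoids of Table~\ref{tab:auts3}, which are pairwise nonisomorphic because their $\dashv$-reducts are.
\item \textbf{Noncommutative $(D,*)$.} I would run through the six dual pairs of Table~\ref{tab:autns3} and check the identity $x*y*z=x*z*y=y*x*z$, using the fact that it holds for $*$ if and only if it holds for $*^d$.
\end{itemize}

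For the noncommutative semigroups, the expected behaviour is as follows.
\begin{itemize}
\item $LO_3$, $LO_2^{+0}$, $LO^{\sim0}_{1\leftarrow2}$ and $LO_{2\leftarrow3}$ satisfy $x*y*z=x*z$ trivially after one permutation; I must still check full symmetry directly. For example, $LO_3$ gives $x$ versus $y$, so $LO_3$ fails. This shows that the naive symmetric condition is wrong.
\item I therefore need to recheck the reduction: $(D_1)$ with $\vdash$ dual reads $(x*y)*z=x*(z*y)$, not a permutation of the factors.
\end{itemize}

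The correct reduction, derived carefully, is:
\begin{itemize}
\item $(D_1)$: $x*y*z=x*z*y$;
\item $(D_2)$: $(y*x)*z=(z*y)*x$...
\end{itemize}
Rather than trust such a rewriting, I would verify each axiom directly on the multiplication tables.

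For each noncommutative semigroup $S$ in Table~\ref{tab:autns3}, I would check $S\rbag S^d$ on $27$ triples per axiom. Only one member of each dual pair needs testing, because $S^d\rbag S$ is the dual of $S\rbag S^d$, so it is also a dimonoid and is abelian, hence self-dual.

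The expected survivors are five:
\begin{itemize}
\item $LO_3\rbag RO_3$ (Loday's example);
\item $LO_2^{+0}\rbag RO_2^{+0}$;
\item $LO_2^{+1}\rbag RO_2^{+1}$;
\item $LO_{2\leftarrow3}\rbag RO_{2\leftarrow3}$;
\item one of $LO^{\sim0}_{1\leftarrow2}$ or $LOB_3$ paired with its dual.
\end{itemize}

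Once the list is fixed, distinctness is immediate. For a given $S$, the dimonoids $S\rbag S^d$ and $S^d\rbag S$ coincide up to isomorphism via the identity map, since $S^d\rbag S=(S\rbag S^d)^d=S\rbag S^d$ by self-duality. The five nontrivial abelian dimonoids are pairwise nonisomorphic because their $\dashv$-reducts are pairwise nonisomorphic semigroups. They are also not isomorphic to the trivial ones, since $\dashv\neq\vdash$ when $S$ is noncommutative. This gives $12+5=17$.

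The main obstacle is the table verification: deciding exactly which of the $6$ noncommutative types survive. In particular, $LO^{\sim0}_{1\leftarrow2}$ and $LOB_3$ need honest checking of $(D_2)$, which is where nontrivial failures typically occur.
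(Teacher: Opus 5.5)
Your proposal has a genuine gap. The decisive step, deciding which noncommutative $S$ make $S\rbag S^d$ a dimonoid, is never carried out, and the list you predict is wrong.
\begin{itemize}
\item $LO_2^{+1}\rbag RO_2^{+1}$ is \emph{not} a dimonoid. With $1$ the adjoined identity and $a\neq b$ the left zeros, $(D_1)$ at $(x,y,z)=(1,a,b)$ gives $(1*a)*b=a$ but $1*(b*a)=b$.
\item \emph{Both} $LO^{\sim 0}_{1\leftarrow 2}\rbag RO^{\sim 0}_{1\leftarrow 2}$ and $LOB_3\rbag ROB_3$ are dimonoids. Together with $LO_3\rbag RO_3$, $LO_{2\leftarrow 3}\rbag RO_{2\leftarrow 3}$ and $(LO_2\rbag RO_2)^{+0}$, they are exactly the five in Table~\ref{tab:noncommabdim3}. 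The paper itself only quotes the theorem from earlier work.
\end{itemize}
Your count of $5$ comes out right only because a false inclusion cancels a false exclusion.

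The duality shortcut is also incorrect. One has $(S\rbag S^d)^d=S\rbag S^d$, since abelian means self-dual; this dual is not $S^d\rbag S$. The structure $S^d\rbag S$ has $\dashv$-reduct $S^d\not\cong S$, so it is neither equal nor isomorphic to $S\rbag S^d$. Whether it is a dimonoid is a separate question: $LO_3\rbag RO_3$ is one, while $RO_3\rbag LO_3$ is not. So ``testing one member of each dual pair'' is not legitimate.

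What is missing is the correct reduction, and it makes the proof short. Put $x\dashv y=x*y$ and $x\vdash y=y*x$. Then $(D_1)$ reads $x*y*z=x*z*y$, $(D_2)$ reads $y*x*z=y*z*x$, and $(D_3)$ reads $z*x*y=z*y*x$. All three are the \emph{same} identity $x*y*z=x*z*y$, and this identity is not preserved by passing to $*^d$.

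Since $\vdash$ is determined by $\dashv$, we have $(D,*)\rbag(D,*)^d\cong(D',*')\rbag(D',*')^d$ if and only if $(D,*)\cong(D',*')$. Hence abelian dimonoids of order $3$ correspond bijectively to those of the $24$ semigroup types (all of them, not $6$ dual pairs) that satisfy $x*y*z=x*z*y$. The check then runs as follows.
\begin{itemize}
\item All $12$ commutative types satisfy the identity and give the trivial dimonoids.
\item A left identity $e$ forces $y*z=e*y*z=e*z*y=z*y$, which excludes $LO_2^{+1}$ and $RO_2^{+1}$.
\item The remaining right-type semigroups fail on explicit triples. For instance, $x*y*z=z$ in $RO_3$, and $a*a*b=b\neq 0=a*b*a$ in $RO^{\sim 0}_{1\leftarrow 2}$. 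Similar witnesses work for $RO_2^{+0}$, $RO_{2\leftarrow 3}$ and $ROB_3$.
\item $LO_3$, $LO_2^{+0}$, $LO_{2\leftarrow 3}$, $LO^{\sim 0}_{1\leftarrow 2}$ and $LOB_3$ satisfy the identity, because $x*y*z$ is either independent of $y,z$ or symmetric in them. For example, in $LOB_3$ with operation $\dashv_c^a$, $a*y*z=a$ if $y=z=a$ and $a*y*z=c$ otherwise.
\end{itemize}
This yields $12+5=17$. Your distinctness argument via $\dashv$-reducts is then fine.
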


The following Table~\ref{tab:noncommabdim3},  taken from~\cite{Gdim2}, lists all pairwise nonisomorphic  abelian  nontrivial dimonoids of order  $3$ and their corresponding automorphism groups.

\begin{table}[H]
\centering
\resizebox{12.5cm}{!}{
\begin{tabular}{|c|c|c|c|c|c|c|}
            \hline
            $D$  & $LO_3\rbag RO_3$  & $LO_{2\leftarrow 3}\rbag RO_{2\leftarrow 3}$ & $LOB_3\rbag ROB_3$ &   $LO^{\sim 0}_{1\leftarrow 2}\rbag RO^{\sim 0}_{1\leftarrow 2}$ & $(LO_2\rbag RO_2)^{+0}$  \\
             \cline{1-6}
            $\Aut(D)$ & $S_3$ & $C_2$ & $C_1$ & $C_{1}$ & $C_{2}$  \\
            \cline{1-6}
\end{tabular}

}
\smallskip
\caption{Nonisomorphic abelian nontrivial $3$-element dimonoids and their automorphism groups}\label{tab:noncommabdim3}
\end{table}

Based on the results of~\cite{Gdim1} and results of Section~\ref{sec:cnonabelian2} concerning noncommutative nonabelian dimonoids and their automorphism groups and properties of dual dimonoids, Table~\ref{tab:noncommdim3} lists pairwise nonisomorphic noncommutative nonabelian nontrivial three-element dimonoids and their automorphism groups.

\begin{table}[H]
\raggedright
\hspace{1cm}
\resizebox{15cm}{!}{
\begin{tabular}{|c|c|c|c|c|c|c|c|}
    \hline
    $D$ & $(LO_2\rbag O_2)^{+0}$   & $LO_3\rbag O_3$   & $LO_3\rbag RO_{2\leftarrow 3}$ & $LO_3\rbag LO_{2\leftarrow 3}$ &  $LO_{2_0\leftarrow 3}\rbag O_3$  &  $LO_{2_a^0\leftarrow 3}\rbag O_3$ &   $LO^{\sim 0}_{1\leftarrow2}\rbag O_3^1$ \\
    \hline
    $\Aut(D)$ & $C_1$  & $C_2$   & $C_{2}$ & $C_{2}$ & $C_1$ &  $C_1$ &  $C_{1}$\\
    \hline
    \hline
    $D$ & $(O_2\rbag RO_2)^{+0}$   & $O_3\rbag RO_3$   &  $LO_{2\leftarrow 3}\rbag RO_3$ &  $RO_{2\leftarrow 3}\rbag RO_3$ & $O_3\rbag RO_{2_0\leftarrow 3}$ & $O_3\rbag RO_{2_a^0\leftarrow 3}$ &  $O_3^1 \rbag RO^{\sim 0}_{1\leftarrow2}$\\
    \hline
    $\Aut(D)$ & $C_{1}$  & $C_2$ &  $C_{2}$ & $C_{2}$ &  $C_1$ & $C_1$ &  $C_{1}$\\
    \hline
\end{tabular}
}

\hspace{1cm}     
\resizebox{15cm}{!}{
\begin{tabular}{|c|c|c|c|c|}
    \hline
    $D$   & $LOB_3\rbag O_3^1$  &  $LOB_{\{a,c,0\}}^{a,c} \rbag RO^{\sim 0}_{\{a\}\leftarrow \{a,b\}}$  & $LOB_{\{a,c,0\}}^{a,c} \rbag RO^{\sim c}_{\{a\}\leftarrow \{a,0\}}$ & $LO_{\{a,c\}_c\leftarrow \{a,b,c\}} \rbag RO_{\{b,c\}_c\leftarrow \{a,b,c\}}$ \\
    \hline
    $\Aut(D)$  & $C_1$ & $C_{1}$ & $C_{1}$ & $C_{1}$ \\
    \hline
\end{tabular}
}

\hspace{1cm}     
\resizebox{10.6cm}{!}{
\begin{tabular}{|c|c|c|c|}
    \hline
    $D$    & $O_3^1 \rbag ROB_3$ &   $LO^{\sim 0}_{\{a\}\leftarrow \{a,b\}} \rbag ROB_{\{a,c,0\}}^{a,c}$  & $LO^{\sim c}_{\{a\}\leftarrow \{a,0\}} \rbag ROB_{\{a,c,0\}}^{a,c}$    \\
    \hline
    $\Aut(D)$   & $C_{1}$ & $C_{1}$ & $C_{1}$  \\
    \hline
\end{tabular}
}
\smallskip
\caption{Nonisomorphic nonabelian  noncommutative nontrivial $3$-element dimonoids  and their automorphism groups}\label{tab:noncommdim3}
\end{table}


By performing computer calculations, we found that, up to isomorphism, there exist exactly $21$ nonabelian noncommutative nontrivial dimonoids of order $3$.   Accordingly, we establish Theorem~\ref{niso_na_nc}, which resolves Problem 4.6 posed in~\cite{Gdim2}. 

\begin{theorem}\label{niso_na_nc}
Up to isomorphism, there exist $21$ nonabelian noncommutative nontrivial dimonoids of order $3$ among which there are exactly  $10$ pairs of  dual dimonoids, and a single iso-dual dimonoid.
\end{theorem}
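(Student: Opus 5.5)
The proof splits into an existence half and a completeness half.

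For existence, I will show that the $21$ dimonoids of Table~\ref{tab:noncommdim3} are indeed nonabelian noncommutative nontrivial dimonoids and are pairwise nonisomorphic. For the entries coming from Section~\ref{sec:cnonabelian2}, the dimonoid property, nonabelianness and noncommutativity are supplied by Theorem~\ref{iso-dual_LO}, Theorem~\ref{LOB_RO_a_D}, Corollary~\ref{LO_a_D_ROB} and Theorem~\ref{LOA_O} with its corollary, specialised to $|D|=3$. The remaining entries are taken from~\cite{Gdim1}.

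Pairwise nonisomorphism follows from a cheap invariant. Any isomorphism $(D,\dashv,\vdash)\to(D',\dashv',\vdash')$ is simultaneously an isomorphism $(D,\dashv)\cong(D',\dashv')$ and $(D,\vdash)\cong(D',\vdash')$. So the ordered pair of isomorphism types of the two component semigroups, read from Tables~\ref{tab:auts3} and~\ref{tab:autns3}, is an invariant. This invariant separates all entries except the pairs of the form $LO_{2_0\leftarrow 3}\rbag O_3$ versus $LO_{2_a^0\leftarrow 3}\rbag O_3$ and $LOB\rbag RO^{\sim 0}$ versus $LOB\rbag RO^{\sim c}$, together with their duals. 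These exceptional pairs are handled by Proposition~\ref{LO_O_iso}, Proposition~\ref{iso_LOB_RO}, and the dual statements obtained via Corollary~\ref{iso_dm_dual}.

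For the duality structure, note that no dimonoid in the list is abelian. By the criterion from~\cite{Gdim1}, each dimonoid is therefore distinct from its dual, and Corollary~\ref{iso_dm_dual} shows that duality permutes isomorphism classes. The dual of each listed dimonoid is computed directly, for example $(LO_3\rbag O_3)^d=O_3\rbag RO_3$, and by Proposition~\ref{duality} these pairs appear in the table as rows mirrored across the double line. Pairing the entries this way accounts for $20$ dimonoids in $10$ dual pairs. The last entry, $LO_{\{a,c\}_c\leftarrow\{a,b,c\}}\rbag RO_{\{b,c\}_c\leftarrow\{a,b,c\}}$, is iso-dual by Theorem~\ref{iso-dual_LO} with $A=\{a,c\}$, $B=\{b,c\}$. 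Since a class cannot be both self-paired and paired with a different class, there is exactly one iso-dual dimonoid.

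The completeness half, that no further dimonoids exist, is the main obstacle, and I would settle it by exhaustive enumeration. Working on $\{0,1,2\}$, I take each of the $113$ associative operations for $\dashv$. For each, I range over the $113$ choices of $\vdash$ and check $(D_1)$–$(D_3)$ on all $27$ triples. I discard the pairs with $\dashv=\vdash$, pairs where $\dashv$ and $\vdash$ are dual (abelian), and pairs where both operations are commutative. The survivors are then reduced modulo the action of $S_3$ by simultaneous conjugation.

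To make the check human-verifiable rather than purely computational, I would organise it by the unordered pair of semigroup types and use Proposition~\ref{lodim} whenever one factor is null. Duality lets me halve the work by assuming, for instance, that $(D,\dashv)$ is the noncommutative factor among the types in Table~\ref{tab:autns3}. Within each case, the orbit count is the number of valid $\vdash$ divided by the relevant $S_3$-orbit data, obtained from the automorphism group of $(D,\dashv)$. The total must come to $21$, matching the table.
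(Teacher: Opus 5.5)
Your proposal is correct and follows the paper's route: the $21$ entries of Table~\ref{tab:noncommdim3} come from Section~\ref{sec:cnonabelian2} and \cite{Gdim1}, and completeness is settled, as in the paper, by exhaustive computer enumeration of pairs of the $113$ associative operations modulo $S_3$; your component-type invariant for nonisomorphism and your involution argument for the $10$ dual pairs plus one iso-dual class spell out what the paper leaves implicit. One correction to your optional hand count: the $\Aut(D,\dashv)$-orbits on admissible $\vdash$ have unequal sizes (for $\dashv$ of type $LO_3$, the three null $\vdash$ form one orbit of size $3$, while $RO_3$ is fixed), so each case needs Burnside's lemma or an explicit orbit decomposition rather than a division.
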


In view of the foregoing, Theorem~\ref{comp_dm3} provides a complete classification, up to isomorphism, of all dimonoids of order 3.

\begin{theorem}\label{comp_dm3}
Up to isomorphism, there exist $52$ dimonoids of order $3$, among which $5$  abelian nontrivial dimonoids and a pair of dual  commutative nontrivial dimonoids. Furthermore, there are $21$ nonabelian noncommutative nontrivial dimonoids of order $3$, comprising exactly $10$ pairs of dual dimonoids and a single iso-dual dimonoid. In addition, there are  $12$ commutative (abelian) trivial dimonoids and $6$ pairs of noncommutative (nonabelian) dual trivial dimonoids.
\end{theorem}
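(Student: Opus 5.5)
The total of $52$ is obtained by partitioning all three-element dimonoids into four disjoint families and adding their sizes. The families are:
\begin{itemize}
\item[(i)] trivial dimonoids;
\item[(ii)] commutative nontrivial dimonoids;
\item[(iii)] abelian nontrivial dimonoids;
\item[(iv)] nonabelian noncommutative nontrivial dimonoids.
\end{itemize}

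First I show that these four classes are pairwise disjoint and cover everything. A dimonoid is either trivial or nontrivial. By the discussion in Section 2, every commutative nontrivial dimonoid is nonabelian, and every abelian nontrivial dimonoid is noncommutative. Hence, among nontrivial dimonoids, the commutative ones, the abelian ones, and the nonabelian noncommutative ones form a partition. Each of these properties is invariant under isomorphism, so isomorphism classes split accordingly.

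Next I count each class.
\begin{itemize}
\item[(i)] A trivial dimonoid $(D,\dashv,\dashv)$ is determined by the semigroup $(D,\dashv)$. An isomorphism of trivial dimonoids is exactly a semigroup isomorphism. So trivial dimonoids of order $3$ correspond to the $24$ semigroups of Tables~\ref{tab:auts3} and~\ref{tab:autns3}. Of these, $12$ are commutative, and these are exactly the commutative and abelian trivial dimonoids, since $x\dashv y=y\dashv x$. The other $12$ are noncommutative. They are therefore nonabelian, and they form $6$ dual pairs because the duals of the semigroups in Table~\ref{tab:autns3} pair up as listed there.
\item[(ii)] By the cited commutative classification theorem, there are $14$ commutative dimonoids, of which $12$ are trivial. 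This leaves $2$ nontrivial ones, forming the dual pair of Table~\ref{tab:commdim3}.
\item[(iii)] The abelian classification theorem gives $17-12=5$ nontrivial abelian dimonoids, listed in Table~\ref{tab:noncommabdim3}.
\item[(iv)] Theorem~\ref{niso_na_nc} gives $21$ such dimonoids, namely $10$ dual pairs and one iso-dual dimonoid.
\end{itemize}

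Adding the counts gives $24+2+5+21=52$, together with the structural description in the statement.

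The main obstacle is really the input from Theorem~\ref{niso_na_nc}. There, the lower bound comes from the $21$ explicit examples in Table~\ref{tab:noncommdim3}, built from Section~\ref{sec:cnonabelian2} and duality. Their pairwise nonisomorphism is checked through invariants: the isomorphism types of $(D,\dashv)$ and $(D,\vdash)$, together with Propositions~\ref{iso_LOB_RO} and~\ref{LO_O_iso} for the delicate cases. The upper bound relies on the exhaustive computer enumeration.

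Given those results, the only point to verify here is that no dimonoid is counted twice across the classes. This holds because the defining properties (trivial, commutative, abelian) separate the classes exactly as described above.
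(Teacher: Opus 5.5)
Your proposal is correct and takes essentially the paper's own route. The theorem is obtained, ``in view of the foregoing,'' by adding four counts: the $24$ trivial dimonoids (the order-$3$ semigroups, i.e.\ $12$ commutative ones and $6$ dual pairs), the $2$ commutative and $5$ abelian nontrivial dimonoids from the cited classifications, and the $21$ dimonoids of Theorem~\ref{niso_na_nc}, giving $24+2+5+21=52$. Your explicit check that these classes are disjoint and exhaustive, which the paper leaves implicit, is sound. It uses the facts that commutative nontrivial dimonoids are nonabelian and abelian nontrivial dimonoids are noncommutative. The resulting total agrees with the computer count $\mathrm{dm}(3)=52$.
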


\section{Number of dimonoids of small order}\label{sec:abelian}

The determination of the numbers $\mathrm{s}(n)$ and $\mathrm{cs}(n)$, representing respectively the counts of all pairwise nonisomorphic semigroups of order 
$n$ and all pairwise nonisomorphic commutative semigroups of order $n$, is a difficult combinatorial problem. The functions $\mathrm{s}(n)$ and $\mathrm{cs}(n)$
grow very rapidly as $n$ tends to infinity. The sequences $(\mathrm{s}(n))$ and $(\mathrm{cs}(n))$ are listed in the On-Line Encyclopedia of Integer Sequences as entries A027851 and A001426, respectively. All currently known exact values of these sequences are presented in Tables~\ref{tab:sg} and~\ref{tab:csg}.

\medskip

\begin{table}[H]
\centering
\resizebox{13cm}{!}{
\begin{tabular}{|r|c|c|c|c|c|c|c|c|c|c|c|}
            \hline
            $n$\ \  & 1  & 2 & 3 &   4 & 5  & 6 & 7 & 8 & 9  \\
            \cline{1-10}
            $\mathrm{s}(n)$  & 1 & 5 & 24 & 188 & 1915 & 28634 & 1627672 & 3684030417 & 105978177936292  \\
            \cline{1-10}
\end{tabular}
}
\smallskip
\caption{Number of nonisomorphic semigroups up to order $9$}\label{tab:sg}
\end{table}

\begin{table}[H]
\centering
\resizebox{12.5cm}{!}{
\begin{tabular}{|r|c|c|c|c|c|c|c|c|c|c|c|}
            \hline
            $n$\ \  & 1  & 2 & 3 &   4 & 5  & 6 & 7 & 8 & 9 & 10 \\
            \cline{1-11}
            $\mathrm{cs}(n)$  & 1 & 3 & 12 & 58 & 325 & 2143 & 17291 & 221805 & 11545843 & 3518930337 \\
            \cline{1-11}
\end{tabular}
}
\smallskip
\caption{Number of nonisomorphic commutative semigroups up to order $10$}\label{tab:csg}
\end{table}

Denote by $\mathrm{dm}(n)$, $\mathrm{cdm}(n)$, $\mathrm{adm}(n)$, and $\mathrm{rdm}(n)$ the number of all pairwise nonisomorphic dimonoids, commutative dimonoids, abelian dimonoids, and rectangular dimonoids of order $n$, respectively. We were able to calculate these cardinalities for small $n$. 
In Appendix~\ref{appnd}, we explain the method used to generate all pairwise nonisomorphic dimonoids of order $n$ and provide a listing of the \texttt{Python} code employed for these computations. The results of (computer) calculations are presented in Tables~\ref{tab:dm}--\ref{tab:rdm}.

\begin{table}[H]
\centering
\resizebox{6cm}{!}{
        \begin{tabular}{|r|c|c|c|c|c|c|}
            \hline
            $n$\ \  & 1  & 2 & 3 &   4 & 5   \\
            \cline{1-6}
            $\mathrm{dm}(n)$  & 1 & 8 & 52 & 734 & 55883  \\
            \cline{1-6}
         \end{tabular}
}
\smallskip
\caption{Number of  nonisomorphic dimonoids up to order 5}\label{tab:dm}
\end{table}

\begin{table}[H]
\centering
\resizebox{7cm}{!}{
        \begin{tabular}{|r|c|c|c|c|c|c|c|}
            \hline
            $n$\ \  & 1  & 2 & 3 &   4 & 5 & 6  \\
            \cline{1-7}
            $\mathrm{cdm}(n)$  & 1 & 3 & 14 & 101 & 1495 & 102268 \\
            \cline{1-7}
        \end{tabular}
}
\smallskip
\caption{ Number of  nonisomorphic commutative dimonoids up to order 6}\label{tab:cdm}
\end{table}

Since a dimonoid is considered trivial when its two operations coincide, the numbers of all pairwise nonisomorphic nontrivial dimonoids and pairwise nonisomorphic commutative nontrivial dimonoids of order $n$ are equal to $\mathrm{dm}(n)\!-\!\mathrm{s}(n)$ and  $\mathrm{cdm}(n)\!-\!\mathrm{cs}(n)$, respectively.

\begin{table}[H]
\centering
\resizebox{6.5cm}{!}{
        \begin{tabular}{|r|c|c|c|c|c|c|c|}
            \hline
            $n$\ \  & 1  & 2 & 3 &   4 & 5 & 6  \\
            \cline{1-7}
            $\mathrm{adm}(n)$  & 1 & 4 & 17 & 103 & 791 & 10870 \\
            \cline{1-7}
        \end{tabular}
}
\smallskip
\caption{ Number of  nonisomorphic abelian dimonoids up to order 6}\label{tab:adm}
\end{table}

Since a commutative dimonoid is abelian if and only if it is trivial, the number of all pairwise nonisomorphic abelian trivial dimonoids of order $n$ is equal to $\mathrm{cs}(n)$, and hence the numbers of all pairwise nonisomorphic nonabelian commutative dimonoids and pairwise nonisomorphic noncommutative abelian dimonoids of order~$n$ are $\mathrm{cdm}(n)\!-\!\mathrm{cs}(n)$ and $\mathrm{adm}(n)\!-\!\mathrm{cs}(n)$, respectively.

\begin{table}[H]
\centering
\resizebox{6cm}{!}{
        \begin{tabular}{|r|c|c|c|c|c|c|}
            \hline
            $n$\ \  & 1  & 2 & 3 &   4 & 5 & 6  \\
            \cline{1-7}
            $\mathrm{rdm}(n)$  & 1 & 6 & 18 & 62 & 166 & 509 \\
            \cline{1-7}
            $\mathrm{rs}(n)$  & 1 & 3 & 5 & 10 & 14 & 27 \\
            \cline{1-7}
        \end{tabular}
}
\smallskip
\caption{ Number of  nonisomorphic rectangular dimonoids and semigroups up to order 6}\label{tab:rdm}
\end{table}

Table~\ref{tab:rdm} also lists the numbers $\mathrm{rs}(n)$ of all pairwise nonisomorphic rectangular semigroups for $n \leq 6$. Hence, the number of all pairwise nonisomorphic rectangular nontrivial dimonoids of order~$n$ is given by $\mathrm{rdm}(n)\! - \!\mathrm{rs}(n)$.

According to Corollary~\ref{char_iso_com_rec_dm}, the numbers of pairwise nonisomorphic noncommutative rectangular dimonoids and pairwise nonisomorphic noncommutative  nontrivial rectangular dimonoids of order~$n$ are equal to $\mathrm{rdm}(n)\! -\! 1$ and $\mathrm{rdm}(n)\! - \!\mathrm{rs}(n)$, respectively.

\begin{problem}Determine the numbers $\mathrm{dm}(n)$ for $n\geq 6$ and $\mathrm{cdm}(n)$, $\mathrm{adm}(n)$, $\mathrm{rdm}(n)$ for $n\geq 7$.
\end{problem}


\appendix

\section{Program code for computing nonisomorphic dimonoids}\label{appnd}

This \texttt{Python} code processes tables of all nonisomorphic semigroups of order $n$ that were previously generated using \texttt{GAP}, where the element numbering was adjusted from $\{1,\ldots,n\}$ to $\{0,\ldots,n\!-\!1\}$. It loads these tables from \texttt{csv}-files, generates all their permutations, and checks pairwise combinations for compliance with the system of dimonoid axioms. It then eliminates isomorphic cases, retaining only representatives of isomorphism classes, and produces a complete list of pairwise nonisomorphic dimonoids of a given order $n$. The results are saved as operation tables for $\dashv$ and $\vdash$ into a single \texttt{csv}-file for further analysis. For $n=6$, we translated this code from \texttt{Python} into \texttt{C++} and performed parallel computations. We present the \texttt{Python} version here, as it is more readable and accessible to the reader.

\begin{lstlisting}
import csv
import glob
import os
from itertools import permutations, product

# ----------------------------
# 1) Load semigroup tables
# ----------------------------
semigroup_tables = []
n = None  # size will be determined automatically

for filename in glob.glob("semigroup_*.csv"):
    with open(filename, newline='') as csvfile:
        reader = csv.reader(csvfile)
        table = []
        for row in reader:
            nums = [int(cell) for cell in row if cell.strip() != ""]
            if n is None:
                n = len(nums)  # determine size from the first row
            table.append(nums)
        semigroup_tables.append(table)

print(f"Loaded {len(semigroup_tables)} semigroup tables (order {n})")

# ----------------------------
# 2) Generate all permutations for each table
# ----------------------------
all_tables = []
for tab in semigroup_tables:
    for p in permutations(range(n)):
        inv = [0]*n
        for idx, val in enumerate(p):
            inv[val] = idx
        newtab = [[p[tab[inv[i]][inv[j]]] for j in range(n)] for i in range(n)]
        all_tables.append(newtab)

print(f"Total associative tables with permutations: {len(all_tables)}")

# ----------------------------
# 3) Check dimonoid axioms
# ----------------------------
def is_dimonoid(op_dashv, op_vdash):
    for x, y, z in product(range(n), repeat=3):
        if op_dashv[op_dashv[x][y]][z] != op_dashv[x][op_vdash[y][z]]: return False #D1
        if op_dashv[op_vdash[x][y]][z] != op_vdash[x][op_dashv[y][z]]: return False #D2
        if op_vdash[op_dashv[x][y]][z] != op_vdash[x][op_vdash[y][z]]: return False #D3
        #if op_dashv[x][y] != op_dashv[y][x]: return False #CommL
        #if op_vdash[x][y] != op_vdash[y][x]: return False #CommR
        #if op_vdash[x][y] != op_dashv[y][x]: return False #Abelian
        #if op_dashv[x][op_dashv[y][z]] != op_dashv[x][z] or op_vdash[x][op_vdash[y][z]] != op_vdash[x][z]: return False #Rectangular
    return True

# ----------------------------
# 4) Find all dimonoids
# ----------------------------
dimonoid_pairs = []
for t1 in all_tables:
    for t2 in all_tables:
        if is_dimonoid(t1, t2):
            dimonoid_pairs.append((t1, t2))

print(f"Found {len(dimonoid_pairs)} ordered dimonoid pairs")

# ----------------------------
# 5) Canonization under permutations
# ----------------------------
canon_seen = set()
rep_pairs = []  # list of representatives to output
for t1, t2 in dimonoid_pairs:
    keys = []
    for p in permutations(range(n)):
        inv = [0]*n
        for idx, val in enumerate(p):
            inv[val] = idx
        r1 = tuple(tuple(p[t1[inv[i]][inv[j]]] for j in range(n)) for i in range(n))
        r2 = tuple(tuple(p[t2[inv[i]][inv[j]]] for j in range(n)) for i in range(n))
        keys.append((r1, r2))
    min_key = min(keys)
    if min_key not in canon_seen:
        canon_seen.add(min_key)
        rep_pairs.append(min_key)

print(f"Number of nonisomorphic dimonoids (order {n}): {len(canon_seen)}")

# ----------------------------
# 6) Save all nonisomorphic dimonoids into one csv-file
# ----------------------------
out_dir = os.path.dirname(os.path.abspath(file))  # program directory
list_file = os.path.join(out_dir, f"list_dm{n}.csv")

with open(list_file, "w", newline='', encoding="utf-8") as f:
    writer = csv.writer(f)
    for idx, (r1, r2) in enumerate(rep_pairs, start=1):
        writer.writerow([f"Dimonoid #{idx}"])
        writer.writerow(["Operation dashv"])
        writer.writerows(r1)
        writer.writerow([])  # empty line
        writer.writerow(["Operation vdash"])
        writer.writerows(r2)
        writer.writerow([])  # separator
        writer.writerow([])

print(f"\nAll nonisomorphic dimonoids saved to {list_file}")
\end{lstlisting}

\end{document}